\theoremstyle{plain}
\newtheorem{theorem}{Theorem}[section]
\newtheorem{lemma}[theorem]{Lemma}
\newtheorem{proposition}[theorem]{Proposition}
\newtheorem{corollary}[theorem]{Corollary}
\theoremstyle{definition}
\newtheorem{definition}[theorem]{Definition}
\theoremstyle{remark}
\newtheorem{remark}[theorem]{\sc Remark}
\newtheorem{example}[theorem]{\sc Example}
\newcommand{\fin}{\hspace*{\fill}$\square$\vspace*{2mm}}
\def\bC{{\mathbb C}}
\def\bK{{\mathbb K}}
\def\bN{{\mathbb N}}
\def\bP{{\mathbb P}}
\def\bR{{\mathbb R}}
\def\bX{{\mathbb X}}
\def\cC{{\mathcal C}}
\def\cM{{\mathcal M}}
\def\cS{{\mathcal S}}
\def\cZ{{\mathcal Z}}
\def\ity{\infty}
\def\m{\setminus}
\def\ord{{\rm ord}}
\def\pr{\mathop{{\rm{pr}}}}
\def\reg{{\rm reg}}
\def\graph{{\rm graph}}
\def\Sing{{\rm Sing}}
\def\lcm{{\rm lcm}}
\def\const.{{\rm const.}}
\def\im{{\rm Im}}
\begin{document}

\title[Regularity at infinity of real mappings]{Regularity at infinity of real mappings and a Morse-Sard theorem}

\begin{abstract}
We prove a new Morse-Sard type theorem for the asymptotic critical values of semi-algebraic mappings and a new fibration theorem at infinity for $C^2$ mappings.   We  show the equivalence of three different types of regularity conditions which have been used in the literature in order to control the asymptotic behaviour of mappings. The central role of our picture is played by the $t$-regularity and its bridge toward the $\rho$-regularity which implies topological triviality at infinity.
 
\end{abstract}

\author{L.R.G. Dias}
\address{ICMC,
Universidade de S\~ao Paulo,  Av. Trabalhador S\~ao-Carlense, 400 -
CP Box 668, 13560-970 S\~ao Carlos, S\~ao Paulo,  Brazil  and Laboratoire Painlev\'e, UMR 8524 CNRS,
Universit\'e de Lille 1, \  59655 Villeneuve d'Ascq, France.}
\email{lrgdias@icmc.usp.br}

\author{M.A.S. Ruas}
\address{ICMC,
Universidade de S\~ao Paulo,  Av. Trabalhador S\~ao-Carlense, 400 -
CP Box 668, 13560-970 S\~ao Carlos, S\~ao Paulo,  Brazil.}
\email{maasruas@icmc.usp.br}

\author{M. Tib\u ar}
\address{Math\' ematiques, Laboratoire Painlev\'e, UMR 8524 CNRS,
Universit\'e Lille 1, \  59655 Villeneuve d'Ascq, France.}
\email{tibar@math.univ-lille1.fr}

\date{\today}
\keywords{Morse-Sard theorem, equisingularity, atypical values, fibration at infinity, regularity at infinity}
\thanks{LRG Dias acknowledges the Brazilian grants CAPES-Proc. 2929/10-04 and FAPESP-Proc. 2008/10563-4. MAS Ruas acknowledges support
from CNPq- Proc. 303774/2008-8 and FAPESP - Proc. 08/54222-6.
M Tib\u ar acknowledges support from the French grant ANR-08-JCJC-0118-01.
}
 
\subjclass[2000]{14D06, 51N10, 14P10, 32S20, 32S15, 58K15, 57BN80}


\maketitle
\pagestyle{plain}
\pagenumbering{arabic}

\section{Introduction}


 Let $f:\bK^{n}\rightarrow\bK^p$, for $n>p>0$ and $\bK = \bR$ or $\bC$, be a non-constant polynomial mapping. It is well-known that $f$ is a locally trivial topological fibration over the complement of the \textit{bifurcation set} $B(f)$, also called \textit{the set of atypical values}. The atypical values may come from the critical values but also from the asymptotic behaviour of the fibres. One can easily see this in the example $f(x,y)= x + x^2y$, where the value $0\in \bK$ is not critical but there is no trivial fibration in any neighbourhood of $0$. 
 
 A complete characterization of $B(f)$ is available only in the case $n=2$ and $p=1$, see \cite{HaLe} for $\bK = \bC$ and \cite{TZ} for $\bK = \bR$. One has therefore imagined various ways to approximate $B(f)$, essentially through the use of \textit{regularity conditions} at infinity. For $p=1$ and $\bK =\bC$, Broughton worked with a Palais-Smale type condition called \textit{tameness} \cite{Br}, later extended by N\' emethi \cite{Ne} and N\' emethi-Zaharia \cite{NZ} to quasi-tame and M-tame. Parusinski used the  \textit{Malgrange condition} \cite{Pa1}, which is a \L ojasiewicz type condition at infinity, and versions of it. Siersma and Tib\u ar worked with the \textit{$t$-regularity} (also called $t$-equisingularity) \cite{ST1, Ti-imrn}, which is a type of non-characteristic condition at infinity, see also \cite{Pa1}.
  Over $\bR$ and still for $p=1$, the $t$-regularity and the $\rho$-regularity were considered in \cite{Ti-cras, Ti-reg}. 
  One finds a detailed discussion of the relations between these conditions in \cite{NZ}, \cite{Du} for the complex setting and in \cite{Ti-reg, Ti-b} for the real and complex settings.

 Let us turn to the case of mappings, i.e. $p>1$. In his  study \cite{Ga-infinity} of polynomial mappings $f: \bC^n \to \bC^p$,  Gaffney defines a generalized Malgrange condition and proves that this yields a set $A_G$ of non-regular values containing $B(f)$. He uses the theory of integral closure of modules to relate his condition to a non-characteristic condition like  Parusinski's \cite{Pa1}.   
 
 In \cite{KOS}, Kurdyka, Orro and Simon considered $C^2$ semi-algebraic maps $f: \bK^n \to \bK^p$  and a metric type regularity condition which had been introduced by Rabier \cite{Ra}. They define a set of asymptotically critical values $K_\ity(f)$ and show the inclusion $A_{KOS} \supset B(f)$, where $A_{KOS} := f(\Sing f)\cup K_\ity(f)$. Remarkably, they prove that $K_\ity(f)$ is a semi-algebraic set of dimension $\le p-1$. 
 Their condition, which we shall call here \textit{$KOS$-regularity}, is a different extension of the Malgrange condition. It was established by Jelonek \cite{Je-man, Je} that these two generalizations of the Malgrange condition are actually equivalent, see Remark \ref{r:jelonek}.   
 
In each of the above cases, showing that some set $A$ of non-regular values is semi-algebraic (or algebraic, in the complex case) of dimension $\le p-1$ and contains $B(f)$, means to prove an \textit{asymptotic Morse-Sard type theorem} together with a \textit{fibration theorem} for the non-proper mapping $f$.

 In our paper, the central object is the \textit{$t$-regularity} in the setting of semi-algebraic $C^2$ mappings $f: \bR^n \to \bR^p$, which is a geometric grounded condition. 
Following Gaffney, the interpretation in terms of integral closure of modules  (Proposition \ref{p:intclosure_interpret}) allows one to prove that the $t$-regularity is  equivalent to the Malgrange-Gaffney condition reformulated over $\bR$ (Theorem \ref{t:t-reg=MalGaff}). Due to the technique, this holds within the class of ``fair'' maps \cite[p. 159 and Prop. 2]{Ga-infinity}, see also \S \ref{ss:fair}.

  Our Theorem \ref{t:main} tells that the $t$-regularity is equivalent to the $KOS$-regularity, providing a bridge between conditions of very different flavor.  This new result together with Jelonek's above mentioned theorem establishes a ``triangle'' of equivalences in Figure \ref{f:spec}. It also shows that the equivalence of the $t$-regularity with the Malgrange-Gaffney condition holds under general conditions, i.e. without the ``fair'' assumption and without using at all the integral closure interpretation, see Example \ref{ex:fair}. This is also a far reaching extension of the equivalence proved for $p=1$ in \cite{ST1, Pa2}.

We pursue by showing that $t$-regularity implies  \textit{$\rho_E$-regularity}, a Milnor type condition of transversality of $f$ to the Euclidean distance function $\rho_E$,  extending a result proved for $p=1$ in \cite{Ti-reg}. The $\rho$-regularity enables one to define the set of asymptotic non $\rho$-regular values $S(f)\subset \bR^p$, and the set $A_\rho := f(\Sing f) \cup S(f)$.

 Then Theorem \ref{t:asympSard}  tells that the subset $S(f)$  is closed semi-algebraic and of dimension $\le p-1$.  Moreover,  Theorem \ref{t:asympSard}(c) shows that there is a locally trivial fibration induced by $f$ on the complement $\bR^p \m A_{\rho_E}$, under the more general setting of a $C^2$-mapping $f : X \to \bR^p$ on a submanifold $X\subset \bR^n$.  Our fibration result is based on a new result, Proposition \ref{p:rho-reg_top-triv}, which is a fibration theorem ``at 
infinity'' , i.e. holding in the complement of a sufficiently large ball.
We show in Corollary \ref{c:kos_vs_sf} the inclusions $A_{\rho_E} \subset A_{KOS}$, and more particularly $S(f) \subset K_\ity(f)$, which inclusions may be strict, cf Example \ref{ex:paunescuzaharia}. Therefore Theorem \ref{t:asympSard} represents an asymptotic Morse-Sard type theorem which refines the one by Kurdyka, Orro and Simon \cite{KOS}. In particular, the key result $\dim K_\ity(f)\le p-1$  of \cite{KOS} is superseded by $\dim S(f)\le p-1$ (cf Theorem \ref{t:asympSard}(b)). Our proof is of a completely different flavor and is based  on the existence of \textit{partial Thom stratifications at infinity}, cf Definition \ref{d:partialThom}, and on the link between $t$-regularity and $\rho$-regularity.

Let us also point out that our definitions of regularity conditions as well as our statements involving them have a local-at-infinity counterpart, i.e. formulated at some fixed point $p_0$ on the boundary at infinity of the graph of $f$ in $\bP^n\times \bR^p$.

Figure \ref{f:spec} presents our results in a condensed manner.

\begin{figure}[hbtp]\label{f:spec}
\begin{center}
\epsfxsize=15cm
\leavevmode
\epsffile{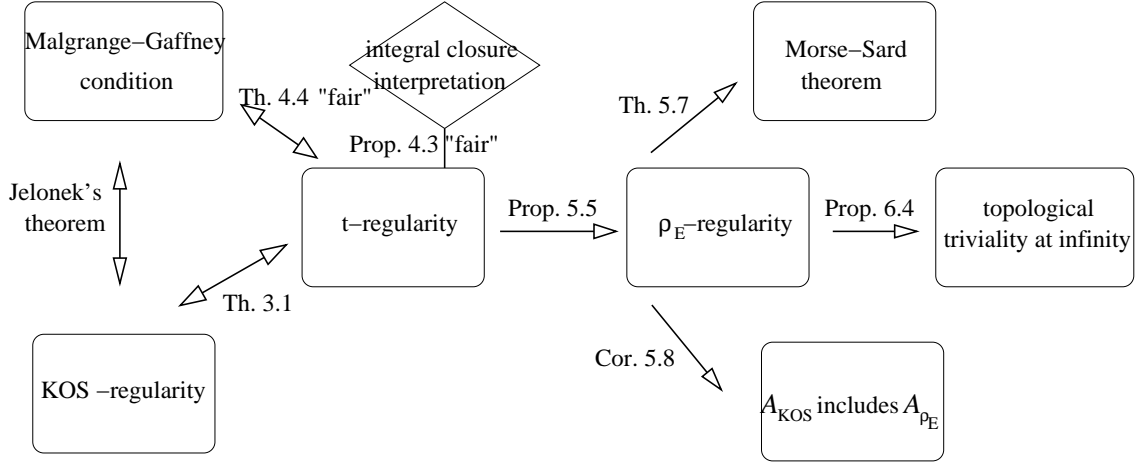}
\end{center}
\caption{{\em Synopsis}}  
\end{figure}



\bigskip

\section{Asymptotic regularity conditions for mappings}\label{s:t-reg}

We introduce the main definitions leading to the notion of $t$-regularity, then define the Malgrange-Gaffney regularity and the $KOS$-regularity.  
\subsection{Conormal spaces}\label{ss:conormalspaces}

Let $X\subset\bR^m$ be a real semi-algebraic subset. We denote by $X_{\reg}$ the set of regular points of $X$ and by $X_{\mathrm{sing}}$ the set of singular points of $X$. We assume that $X$ contains at least a regular point.

\begin{definition}\label{d:conorm}
Let 
\[C(X):=\mbox{closure}\{(x,H)\in X_{\reg}\times\check{\bP}^{m-1}\mid T_xX_{\reg}\subset H \}\subset \overline{X}\times\check{\bP}^{m-1}\]
be the \textit{conormal modification of $X$}.
Let $\pi:C(X)\to \overline{X}$ denote the projection. 
\end{definition}

\begin{definition}\label{d:relconorm}
  Let $g:X  \rightarrow \bR$ be an analytic function defined in some neighbourhood of $X$ in $\bR^m$. Let $X_0$ denote the subset of $X_\reg$ where $g$ is a submersion. The \emph{relative conormal space of} $g$ is defined as follows:
\[ C_{g}(X):=\mbox{closure}\{(x,H)\in X_0 \times\check{\bP}^{m-1}\mid T_x(g^{-1}(g(x)))\subset H\}\subset \overline{X}\times\check\bP^{m-1},\]
together with the projection $\pi: C_{g}(X) \to \overline{X}$, $\pi(x,H) = x$.
\end{definition}

For some $y\in  \overline{X}$ such that $g(y)=0$, let $C_{g,y}(X):=\pi^{-1}(y)$. The following result shows that $C_{g,y}(X)$ depends on the germ of $g$ at $y$ only up to multiplication by some invertible analytic function germ $\gamma$. It was stated for analytic $X$ but holds for semi-algebraic. 

\begin{lemma}\cite[Lemma 1.2.7]{Ti-b}\label{l:unit} \\
Let $\gamma:(\bR^m,y)\rightarrow \bR$ be an analytic function such that $\gamma(y)\neq 0$. Then $C_{\gamma g,y}(X)=C_{g,y}(X)$.
\fin
\end{lemma}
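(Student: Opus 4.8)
The plan is to show that replacing $g$ by $\gamma g$ near $y$ does not change the limiting tangent hyperplanes to the level sets, hence does not change the fibre $\pi^{-1}(y)$ of the relative conormal space. First I would work on the open dense subset $X_0$ where $g$ is a submersion; note that since $\gamma(y)\neq 0$, after shrinking the neighbourhood we may assume $\gamma$ is nowhere zero on it, so $\gamma g$ is a submersion on exactly the same set $X_0$, and moreover $(\gamma g)^{-1}(\gamma g(x)) \cap X_0$ and $g^{-1}(g(x))\cap X_0$ have the same germ at each $x\in X_0$ — indeed the two functions have the same zero-level-set germ wherever they vanish, and more generally the fibres of $g$ and $\gamma g$ through a common point $x$ coincide as germs because $\gamma g = (\text{unit})\cdot g$ shifted by constants. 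The cleanest way to see the tangent-space equality is the derivative computation $d_x(\gamma g) = \gamma(x)\, d_x g + g(x)\, d_x\gamma$: at a point $x$ where we are comparing the fibres through $x$, the relevant level is $\{\gamma g = \gamma(x)g(x)\}$, and one checks $T_x\big((\gamma g)^{-1}(\gamma g(x))\big) = \ker d_x(\gamma g)$. This is generally \emph{not} equal to $\ker d_x g$ when $g(x)\neq 0$, so the argument must be localised to the relevant situation.

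The key point is that for the fibre $C_{g,y}(X) = \pi^{-1}(y)$ we only care about sequences $x_k \to y$ with $x_k \in X_0$, and since $g(y) = 0$ we have $g(x_k) \to 0$. Along such a sequence, writing $H_k$ for a hyperplane containing $T_{x_k}(g^{-1}(g(x_k)))$, i.e. containing $\ker d_{x_k} g$, I would compare with the hyperplane containing $\ker d_{x_k}(\gamma g)$. Using $d_{x_k}(\gamma g) = \gamma(x_k) d_{x_k} g + g(x_k) d_{x_k}\gamma$ and the facts $\gamma(x_k) \to \gamma(y) \neq 0$, $g(x_k)\to 0$, and $d_{x_k}\gamma$ bounded near $y$, the covector $d_{x_k}(\gamma g)$ and the covector $d_{x_k} g$, once normalised in $\check\bP^{m-1}$, have the same limit. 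Hence any limit hyperplane arising from the $\gamma g$-conormal over $y$ is a limit hyperplane arising from the $g$-conormal over $y$, and vice versa by symmetry (replacing $\gamma$ by $1/\gamma$, which is again analytic and nonzero near $y$). Therefore $C_{\gamma g, y}(X) = C_{g,y}(X)$.

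I would organise the write-up as: (1) shrink the neighbourhood so that $\gamma$ is a nonvanishing analytic unit there, deducing that $g$ and $\gamma g$ are submersions on the same set $X_0$; (2) record the Leibniz identity for $d(\gamma g)$; (3) take an arbitrary point $(y, H) \in C_{\gamma g, y}(X)$, represent it as a limit of $(x_k, H_k)$ with $x_k \in X_0$, $x_k \to y$, and $T_{x_k}((\gamma g)^{-1}(\gamma g(x_k))) \subset H_k$; (4) pass to the normalised covectors and use $g(x_k) \to 0$ together with boundedness of $d\gamma$ to conclude the defining covectors converge to the same point of $\check\bP^{m-1}$, so $(y,H)$ also lies in $C_{g,y}(X)$; (5) invoke symmetry. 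The main obstacle — really the only subtle point — is step (4): one must be careful that the identification of tangent spaces with kernels of differentials, and the projectivisation, behave well in the limit; this is where the hypothesis $\gamma(y) \neq 0$ and $g(y) = 0$ are both essential, since without $g(y)=0$ the two conormal fibres genuinely differ. Everything else is routine, and semi-algebraicity plays no role beyond ensuring the closures in Definition \ref{d:relconorm} are well-behaved, exactly as remarked before the statement.
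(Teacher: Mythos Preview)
The paper does not prove this lemma; it is quoted from \cite{Ti-b} and closed immediately with \fin. So there is no in-paper argument to compare against, and I simply assess your sketch.

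Your plan is right in spirit --- the Leibniz identity $d(\gamma g)=\gamma\,dg+g\,d\gamma$ together with $g(y)=0$ is indeed the mechanism --- but step~(4) hides more than you acknowledge. A hyperplane $H_k$ in the relative conormal fibre over $x_k\in X_0$ is the kernel of some covector $\ell_k\in\mathrm{Ann}(T_{x_k}X)+\bR\,d_{x_k}(\gamma g)$, not of $d_{x_k}(\gamma g)$ alone; writing $\ell_k=\alpha_k+\beta_k\,d_{x_k}(\gamma g)$ with $\alpha_k\in\mathrm{Ann}(T_{x_k}X)$ and $\|\ell_k\|=1$, the obvious companion on the $g$-side is $\ell'_k=\alpha_k+\beta_k\gamma(x_k)\,d_{x_k}g$, and $\ell_k-\ell'_k=\beta_k\,g(x_k)\,d_{x_k}\gamma$. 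Knowing only that $g(x_k)\to 0$ and that $d\gamma$ is bounded does \emph{not} force this to vanish, because $|\beta_k|$ is controlled only by $1/\|d_{x_k}(\gamma g)|_{T_{x_k}X}\|$, which may blow up. What is actually required is the \L ojasiewicz-type estimate
\[
\frac{|g(x_k)|}{\bigl\|\,d_{x_k}g|_{T_{x_k}X}\,\bigr\|}\;\longrightarrow\;0\qquad(x_k\in X_0,\ x_k\to y),
\]
and this is exactly where the analytic/semi-algebraic hypothesis --- which you said ``plays no role'' --- enters. One proves it by curve selection: along an analytic arc $\phi$ with $\phi(0)=y$, $\phi((0,\varepsilon))\subset X_0$, on which the ratio stays $\ge\delta$, the chain rule gives $|(g\circ\phi)'|\le \|\phi'\|\cdot\|d(g|_X)\circ\phi\|\le (C/\delta)\,|g\circ\phi|$; Gronwall and $g(\phi(0))=0$ then force $g\circ\phi\equiv 0$, hence $d(g|_X)\circ\phi\equiv 0$, contradicting $\phi(s)\in X_0$. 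With this estimate your scheme goes through; without it the argument is incomplete.
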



\subsection{Characteristic covectors and \texorpdfstring{$t$}{t}-regularity}\label{ss:char_covect}

Let $X\subset \bR^n$ be semi-algebraic and let $f:X \rightarrow\bR^p$ be a semi-algebraic $C^1$ mapping with $n>p$. We use coordinates  $(x_1,\ldots,x_n)$ for the space $\bR^n$ and coordinates $[x_0:x_1:\ldots:x_n]$ for the projective space $\bP^n$. We denote  by $H^{\infty}=\{[x_0:x_1:\ldots:x_n]\in\bP^n\mid x_0=0\}$ the hyperplane at infinity.

Let $\bX := \overline{\graph f}$ be the closure of the graph of $f$ in $\bP^n\times\bR^p$ and let $\bX^{\infty}:=\bX\cap (H^{\infty}\times\bR^p)$. One has the semi-algebraic isomorphism  $\graph f\simeq X$.

We consider the  affine charts $U_j\times\bR^p$ of $\bP^n\times\bR^p$, where $U_j=\{x_j\neq0\}$, $j=0,1,\ldots,n$. Identifying the chart $U_0$ with the affine space $\bR^n$, we have  $\bX\cap (U_0\times\bR^p)=\bX\setminus\bX^\infty=\graph f$ and $\bX^\infty$ is covered by the charts $U_1\times\bR^p,\ldots, U_n\times \bR^p$.

 If $g$ denotes the projection to the variable $x_0$ in some affine chart $U_j\times\bR^p$, then the
 relative conormal $C_{g}(\bX\backslash \bX^{\infty} \cap U_j\times\bR^p) \subset \bX \times \check\bP^{n+p -1}$ is well defined, with the projection $\pi(y,H) = y$.
Let us then consider the space $\pi^{-1}(\bX^\ity)$ which is well defined for every chart $U_j\times\bR^p$ as a subset of $C_{g}(\bX\backslash \bX^{\infty} \cap U_j\times\bR^p)$. By  Lemma \ref{l:unit}, the definitions coincide at the intersections of the charts. We therefore have: 

\begin{definition}\label{d:car_covectors_space}
We call  {\em space of characteristic covectors at infinity} the well-defined set $\mathcal{C}^{\ity} :=\pi^{-1}(\bX^\ity)$. For some $p_0\in \bX^\ity$, we denote $\cC^\ity_{p_0} := \pi^{-1}(p_0)$.
\end{definition}

Let $\tau:\bP^n\times\bR^p\rightarrow\bR^p$ denote the second projection. One defines the relative conormal space $C_{\tau}(\bP^n\times\bR^p)$ like in Definition \ref{d:relconorm} where  the function $g$ is replaced by the application $\tau$.

\begin{definition}\label{d:t-reg}
 We say that $f$ is \emph{$t$-regular} at $p_0\in\bX^\infty$ if $C_{\tau}(\bP^n\times\bR^p)\cap \cC^\ity_{p_0}=\emptyset$.
\end{definition}

%

\subsection{Interpretation of the \texorpdfstring{$t$}{t}-regularity in local charts}\label{ss:t-reg-interpret}

Let $f=(f_1,\ldots,f_p):\bR^n\to\bR^p$ be a semi-algebraic $C^1$ mapping and let  $p_0\in \bX^{\infty}$.  Up to  some linear change of coordinate one may assume that $p_0\in \bX^\infty\cap (U_n\times\bR^p)$.  In the intersection of charts  $(U_0\cap U_n)\times \bR^p$, one considers the change of coordinates  $x_1=y_1/y_0, \ldots,x_{n-1}= y_{n-1}/y_0$, $x_n = 1/y_0$, where $(x_1, \ldots , x_n)$ are the coordinates in $U_0$ and $(y_0, \ldots , y_{n-1})$ are those in $U_n$. Then for $i=1,\ldots,p$, we define: \[F_i(y,t)=F_i(y_0,y_1,\ldots,y_{n-1},t_1,\ldots,t_p):=f_i(\frac{y_1}{y_0},\ldots,\frac{y_{n-1}}{y_0},\frac{1}{y_0})-t_i\]
and $F(y,t) := (F_1(y,t),\ldots ,F_p(y,t))$. Then $\bX\cap((U_0\cap U_n)\times\bR^p)= \bigcap\limits_{i=1}^p \left\{ F_i(y,t)=0\right\}$.  

Let  $\vec{n_0}=\left(1,0,\ldots,0\right)\in\bR^n\times\bR^p$ denote a normal vector to the hypersurface $\{y_0 =\mbox{constant}\}$ and,  for $i=1,\ldots,p$,  let us consider a normal vector to $\{F_i=0\}$ at $(y,t)\in \bX\cap((U_0\cap U_n)\times\bR^p)$, as follows:
\[ \vec{n_i}(y,t)= \nabla F_i(y,t)= (\nabla_n F_i(y,t), \nabla_p F_i(y,t)),
\]
where
\[\nabla_n F_i(y,t):= \left(\frac{\partial F_i}{\partial y_0}(y,t),\ldots,\frac{\partial F_i}{\partial y_{n-1}}(y,t)\right),\ \  \nabla_p F_i(y,t):= \left(\frac{\partial F_i}{\partial t_1}(y,t),\ldots,\frac{\partial F_i}{\partial t_{p}}(y,t)\right).\]

By Definition \ref{d:t-reg}, $f$ is not $t$-regular at $p_0\in\bX^\infty$ if and only if there exists a sequence $\{(y_k,t_k)\}_{k\in \bN}\subset \bX\cap((U_0\cap U_n)\times\bR^p)$ such that $(y_k,t_k)\to p_0$ and  the tangent hyperplanes to the fibres of $g_{|\bX}$ at $(y_k,t_k)$ tend to a hyperplane $H$ such that its normal line has a direction  of the form $[0: \cdots : 0: b_1: \cdots: b_p]$ in $\bP^{n+p-1}$. More explicitly,
 there exists a sequence $\{(\psi_{0k},\psi_{1k},\ldots,\psi_{p_k})\}_{k\in \bN}\subset\bR^{p+1}$ such that the limit $\lim_{k\to\infty } \sum_{i=0}^p\psi_{ik}\vec{n_i}(y_k,t_k)$
of the linear combination of normal vectors $\vec{n_i}$
has the direction  $\vec{n}_H = [0 : 0:\ldots: 0:b_1:\ldots:b_p]\in \bP^{n+p-1}$.

\subsection{\texorpdfstring{$KOS$}{KOS}-regularity} \label{ss:def_KOS-reg}
Kurdyka, Orro and Simon defined in \cite{KOS} a regularity condition at infinity for $C^1$ mappings  $f:\bR^n\to\bR^p$ which contains the Rabier function $\nu$, cf \cite{Ra}, and compared it to the Kuo distance function \cite{Kuo}. Jelonek \cite{Je-man,Je} compared it to the generalized Gaffney function, see Remark \ref{r:jelonek}.

Let $f:\bR^n\to\bR^p$ be a nonconstant semi-algebraic $C^1$ mapping, where $n>p$. 

\begin{definition}\label{d:kos}\cite{KOS}
The set of \textit{asymptotic critical values of $f$} is defined as:
\begin{eqnarray*}
K_{\infty}(f)& := & \{t\in\bR^{p}\mid \exists \{ \mathrm{x}_{j}\}_{j\in \bN} \subset \bR^{n}, \lim_{j\to\infty}\|
\mathrm{x}_j\|=\infty, \\
 &  &  \lim_{j\to\infty}f(\mathrm{x}_j)= t \mathrm{\ and\ }\lim_{j\to\infty}\|\mathrm{x}_j\|\nu(\mathrm{D}f (\mathrm{x}_j))=0\}  
\end{eqnarray*}
where $\nu(A):=\inf_{\|\varphi\|=1}\|A^*(\varphi)\|$, for 
$A\in\mathcal{L}(\bR^n,\bR^p)$.
\end{definition}
We shall reformulate this condition in a localized version, at some point at infinity $p_0\in \bX^\ity$.

\begin{definition}\label{d:kos-reg}
Let  $\graph f \subset \bX \subset \bP^n\times \bR^p$ be a fixed embedding.
We say that  $p_0 \in \bX^\ity$ is an \textit{asymptotical critical point of $f$} if and only if there exists $\{\mathrm{x}_j\}_{j\in \bN}
\subset\bR^n \simeq \graph f$ such that $\lim_{j\to\infty} (\mathrm{x}_j,f(\mathrm{x}_j))= p_0$ and that $\tau(p_0)\in K_{\infty}(f)$.

We say that $t_0\in \bR^p$ is a \textit{$KOS$-regular value} of $f$ if $t_0\not\in K_{\infty}(f)$.
We say that $p_0\in \bX^\ity$ is a \textit{$KOS$-regular point} whenever $p_0$ is not an asymptotical critical point of $f$.
\end{definition}

Let us remark that the notion of $KOS$-regular point depends on the embedding of $\bR^n$, whereas the $KOS$-regular values  do not.


\subsection{Malgrange-Gaffney condition}\label{ss:malgrange-gaffney}
Following \cite{Ga-infinity},
let $f:\bR^n\to\bR^p$ be a semi-algebraic $C^1$ mapping and let $I$ be multi-index with $I=(i_1<i_2<\ldots<i_p)$.
One denotes by $M_I(f)$ the maximal minor of the Jacobian matrix of $f$ formed from the columns indexed by $I$ and by $M_J(j,f)$ the minor of the Jacobian matrix of size $(p-1)\times(p-1)$ using the columns indexed by $J$, and all the rows of the Jacobian matrix except for the $j$-th row. If $p=1$, then, by convention, $M_J(j,f)=1$.

\begin{definition}
Let $\{\mathrm{x}_k\}_{k\in\bN}$ be a sequence of points in $\bR^n$ and let us consider the following properties:

\noindent  $(L_1 )$  $\|\mathrm{x}_k\|\to \infty$ and $f(\mathrm{x}_k)\to t_0$, as $k\to\infty$.

\noindent $(L_2)$ $(\mathrm{x}_k, f(\mathrm{x}_k))\to p_0\in\bX^{\infty}$, as $k\to\infty$.

After \cite{Ga-infinity}, we say that the fibre $f^{-1}(t_0)$ verifies the  \textit{Malgrange-Gaffney condition} if there is $\delta>0$ such that, for any sequences of points $\{\mathrm{x}_k\}_{k\in\bN}$ with property $(L_1)$ one has
\begin{equation}\label{eq:malg.}
\|\mathrm{x}_k\|\frac{(\sum_I\|M_I(f)(\mathrm{x}_k)\|^2)^{\frac{1}{2}}}{(\sum_{J,j}\|M_J(j,f)(\mathrm{x}_k)\|^2)^{\frac{1}{2}}}\geq\delta.
\end{equation}

Localizing this condition, we shall say that $f$ verifies the Malgrange-Gaffney condition  at $p_0\in\bX^{\infty}$ if there is $\delta>0$ such that the above inequality holds  for any sequences of points with property $(L_2)$.
\end{definition}

It follows that $f^{-1}(t_0)$ verifies the Malgrange-Gaffney condition if and only if $f$ verifies the generalized Malgrange-Gaffney condition at any point $p_0\in\tau^{-1}(t_0)\cap\bX^{\infty}$.



\section{Equivalence of regularity conditions}\label{s:KOS}

 
 With the above definitions and notations, we have:
\begin{theorem}\label{t:main}
 Let $f:\bR^n\to\bR^p$ be a non-trivial semi-algebraic $C^1$ mapping, $n>p>0$. Let $p_0\in \bX^ \ity$. Then the following are equivalent:
\begin{enumerate}
\rm \item \it $f$ is $t$-regular at $p_0$.
\rm \item \it  $f$ is $KOS$-regular at $p_0$.
\rm \item \it $f$ verifies the Malgrange-Gaffney condition at $p_0$.

\end{enumerate}

\end{theorem}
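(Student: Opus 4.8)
The plan is to prove the equivalence by working in a local chart near $p_0$ and translating all three conditions into statements about sequences of (normalized) linear combinations of gradient vectors. We already have, in \S\ref{ss:t-reg-interpret}, an explicit description of the failure of $t$-regularity at $p_0$: there is a sequence $(y_k,t_k)\to p_0$ in $\bX$ and coefficients $\psi_{ik}$ such that $\sum_{i=0}^p \psi_{ik}\vec n_i(y_k,t_k)$ converges projectively to a covector of the form $[0:\cdots:0:b_1:\cdots:b_p]$. The key computation is to differentiate the relations $F_i(y,t) = f_i(y_1/y_0,\dots,1/y_0) - t_i$ and read off the components of $\vec n_i$ in terms of the partials of $f_i$ evaluated at $\x_k = (y_{1k}/y_{0k},\dots,1/y_{0k})$; note $\|\x_k\| \sim 1/|y_{0k}|$ as $y_{0k}\to 0$. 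The condition that the first $n$ coordinates of the limiting covector vanish while the last $p$ coordinates are nonzero is, after clearing the factor $1/y_0$ coming from the chain rule, exactly the statement that a nontrivial combination $\sum_i \lambda_i \nabla f_i(\x_k)$ tends to $0$ after multiplication by $\|\x_k\|$ — i.e. that $\x_k$ is a Rabier-type asymptotically critical sequence. This is the heart of the equivalence (a) $\Leftrightarrow$ (b).

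For (b) $\Leftrightarrow$ (c) I would use the standard linear-algebra fact that the Rabier function $\nu(\mathrm{D}f(\x))$ is comparable (up to constants depending only on $n,p$) to the ratio
\[
\frac{\big(\sum_I M_I(f)(\x)^2\big)^{1/2}}{\big(\sum_{J,j} M_J(j,f)(\x)^2\big)^{1/2}},
\]
which is the quantity appearing in the Malgrange-Gaffney inequality \eqref{eq:malg.}. This comparison is exactly the content of Jelonek's work \cite{Je-man,Je} relating the Rabier function to the generalized Gaffney function; since it is a pointwise inequality between two continuous semi-algebraic functions on the set where $\mathrm{D}f$ has rank $p$, multiplying through by $\|\x_k\|$ shows that $\|\x_k\|\nu(\mathrm{D}f(\x_k))\to 0$ along a sequence with property $(L_2)$ if and only if \eqref{eq:malg.} fails along that sequence. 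One has to be slightly careful about sequences along which $\mathrm{rank}\,\mathrm{D}f(\x_k) < p$: there $\nu = 0$ and the numerator $\sum_I M_I(f)^2$ also vanishes, so both conditions degenerate consistently; I would dispose of this case separately, noting that such sequences, if they accumulate at $p_0$, make $p_0$ fail all three conditions simultaneously (the limiting covector lies in the kernel of $\mathrm{D}\tau$ automatically).

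The step I expect to be the main obstacle is controlling the limiting behaviour of the coefficients $\psi_{ik}$ in the $t$-regularity description: a priori the covectors $\vec n_0,\dots,\vec n_p$ may become linearly dependent in the limit, or the rescaling needed to extract a projective limit may force $\psi_{0k}$ (the coefficient of $\vec n_0 = (1,0,\dots,0)$) to blow up relative to the others. The fix is to observe that the $\vec n_0$ direction only affects the $\partial/\partial y_0$ component, so when we demand that the limit covector have all its $y$-components zero, the coefficient $\psi_{0k}$ is determined (up to lower order) by the $\psi_{ik}$, $i\ge 1$, via the $\partial F_i/\partial y_0$ terms; thus one can normalize $\sum_{i\ge 1}|\psi_{ik}|^2 = 1$, pass to a convergent subsequence $\psi_{ik}\to\lambda_i$ with $(\lambda_1,\dots,\lambda_p)\ne 0$, and check that the surviving $t$-components of the limit are $(\lambda_1,\dots,\lambda_p)$ up to a common nonzero scalar — which forces $b\ne 0$ and closes the loop. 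Once this bookkeeping is in place, the three implications assemble into the stated cycle. $\hfill\square$
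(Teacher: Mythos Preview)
Your handling of (b)$\Leftrightarrow$(c) by invoking Jelonek's comparison of the Rabier function with the Gaffney ratio matches the paper exactly, and your outline of the implication ``not KOS-regular $\Rightarrow$ not $t$-regular'' is essentially correct and coincides with the paper's argument.

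The gap is in the reverse implication ``not $t$-regular $\Rightarrow$ not KOS-regular''. You write that the vanishing of the first $n$ coordinates of the limiting covector is ``exactly'' the statement that $\|\x_k\|\cdot\sum_i\lambda_i\nabla f_i(\x_k)\to 0$. But the chart $U_n$ has coordinates $y_0,y_1,\dots,y_{n-1}$, and the chain rule gives $\partial F_i/\partial y_j = x_n\,\partial f_i/\partial x_j$ only for $j=1,\dots,n-1$; the coordinate $y_0$ carries the radial derivative $\partial F_i/\partial y_0 = -x_n\sum_{l=1}^n x_l\,\partial f_i/\partial x_l$. Moreover the $y_0$-component of $\vec n_{H_k}$ is absorbed by the free coefficient $\psi_{0k}$ of $\vec n_0=(1,0,\dots,0)$, so its vanishing carries \emph{no} information. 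Thus the $t$-regularity hypothesis directly controls only $x_{nk}\sum_i\psi_{ik}\,\partial f_i/\partial x_j$ for $j=1,\dots,n-1$; the $j=n$ component is not accounted for in your plan.

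This missing component is precisely the hard step in the paper. One writes $x_n\,\partial f_i/\partial x_n = -\sum_{j=0}^{n-1} y_j\,\partial F_i/\partial y_j$; the terms $j\ge 1$ are harmless since $|y_{jk}|\le 1$, but the term $\sum_i\psi_{ik}\,y_{0k}\,\partial F_i/\partial y_0$ must be shown to tend to zero. This is not automatic: it requires the inequality
\[
\Bigl|\sum_i\psi_{ik}\,y_{0k}\,\tfrac{\partial F_i}{\partial y_0}\Bigr| \ \ll\ \Bigl\|\bigl(\textstyle\sum_i\psi_{ik}\tfrac{\partial F_i}{\partial y_1},\dots,\sum_i\psi_{ik}\tfrac{\partial F_i}{\partial y_{n-1}},\psi_{1k},\dots,\psi_{pk}\bigr)\Bigr\|,
\]
whose proof in the paper goes by contradiction via the Curve Selection Lemma applied to a suitable semi-algebraic set, differentiating the identity $F(\phi(s))\equiv 0$ along the selected curve and comparing orders in $s$. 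Your ``main obstacle'' paragraph addresses a different (and easier) normalization issue and does not touch this point; without it, the cycle does not close.
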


\smallskip
\begin{proof}
 We shall prove (a)$\Leftrightarrow$(b).

One may assume (eventually after some linear change of coordinates) that $p_0\in \bX^\infty\cap (U_n\times \bR^p)$ and that we have  $|x_{n}|\geq |x_{i}|$, $i=1,\ldots,n-1$, for $\mathrm{x} \in \bR^n$ in some neighbourhood of $p_0$.

\smallskip
\noindent
\sloppy \textbf{(a)$\Rightarrow$(b)}. By definition, if  $p_0$ is an asymptotical critical point of $f$ and therefore $t_0 := \tau (p_0)$ is an asymptotical critical value,  then there exist sequences $\{\mathrm{x}_{k}:=\left(x_{1k},\ldots,x_{nk}\right) \}_{k\in \bN} \subset\bR^n$ and $\{\psi_k=(\psi_{1k},\ldots,\psi_{pk})\}_{k\in \bN}\subset \bR^p$ with $\|\psi_k\|=1$ and $\psi_k \to \psi$, such that $(\mathrm{x}_k,f(\mathrm{x}_k))\to p_0$ and that: 

\begin{eqnarray}\label{eq:kos1}
\|\mathrm{x}_k\| \left\| \left(\sum_{i=1}^p\psi_{ik}\frac{\partial f_i}{\partial x_1}(\mathrm{x}_k),\ldots, \sum_{i=1}^p\psi_{ik}\frac{\partial f_i}{\partial x_n}(\mathrm{x}_k) \right)\right\| \to 0.
\end{eqnarray}

Since for large enough $k$ we have $|x_{nk}|\geq |x_{ik}|$, $i=1,\ldots,n-1$,
one may replace in (\ref{eq:kos1}) $\|\mathrm{x}_k\|$ by $|x_{nk}|$, then multiply the sums by $x_{nk}$.

In the notations of \S \ref{ss:t-reg-interpret}, by changing coordinates within $U_0\cap U_n$, one has $y_0= 1/x_n,y_i=x_i/x_n$ and the relations:
\begin{equation}\label{eq:partials}
\left\{ \begin{array}{ll} 
  \frac{\partial F_j}{\partial y_i}(y,t)=x_n\frac{\partial f_j}{\partial x_i}(x), & \,\, 1\leq i \leq n-1,\,\,1\leq j\leq p,\\[1.5mm]
\frac{\partial F_j}{\partial t_l}(y,t)= -\delta_{l,j}, & \,\,\, 1\leq j, l \leq p,\\[1.5mm]
\frac{\partial F_j}{\partial y_0}(y,t)= - x_n(x_1\frac{\partial f_j}{\partial x_1}(x)+\ldots+x_n\frac{\partial f_j}{\partial x_n}(x)), & \,\,\, 1\leq j \leq p.
 \end{array} \right.
\end{equation}

 Then the condition (\ref{eq:kos1}) implies:

\begin{eqnarray}\label{eq:kos2}
\left\| \left(  \sum\limits_{i=1}^p\psi_{ik}\frac{\partial F_i}{\partial y_1}({\mathrm y}_k,{\mathrm t}_k),\ldots, \sum\limits_{i=1}^p\psi_{ik}\frac{\partial F_i}{\partial y_{n-1}}({\mathrm y}_k,{\mathrm t}_k) \right)\right\| \to 0.
\end{eqnarray}

The vector 
\[ \vec{n}_{H_k} :=  \left( 0,  \sum\limits_{i=1}^p\psi_{ik}\frac{\partial F_i}{\partial y_1}({\mathrm y}_k,{\mathrm t}_k),\ldots, \sum\limits_{i=1}^p\psi_{ik}\frac{\partial F_i}{\partial y_{n-1}}({\mathrm y}_k,{\mathrm t}_k), -\psi_{1k}, \ldots , -\psi_{pk} \right)\]
is a linear combination of the normal vectors $\vec{n}_i$ defined in \S \ref{ss:t-reg-interpret} with coefficients $\psi_{ik}$,
thus the hyperplanes $H_k$ are tangent to the levels of the function $g_{| \bX}$.
We have 
\[ \vec{n}_{H_k} \to \vec{n}= [ 0:0:\ldots :0:\psi_1:\ldots:\psi_p ] 
\]
which shows that the limit tangent hyperplane $H= \lim_{k\to\ity} H_k$, to which $\vec{n}$ is normal, belongs to  $\cC^\ity_{p_0}$. This implies that $f$ is not $t$-regular at $p_0$.
 
\smallskip
\noindent
\textbf{(b)$\Rightarrow$(a).}
Reciprocally, let $p_0\in \bX^\ity$ be not $t$-regular. Then there exist some sequence of points $\{(\mathrm{y}_k,\mathrm{t}_k)\}_{k\in \bN}\subset \bX\cap((U_0\cap U_n)\times \bR^p)$ tending to $p_0$, and a sequence of hyperplanes $H_k$ tangent to the levels of $g$ at $(\mathrm{y}_k,\mathrm{t}_k)$, such that $H_k \to H \in \cC^\ity_{p_0}$. This means that there exist sequences $\{\tilde \psi_k=(\tilde\psi_{1k},\ldots,\tilde\psi_{pk})\}_{k\in \bN}\subset \bR^p$ and  $\{\lambda_k\}_{k\in \bN}\subset \bR$ such that $\vec{n}_{H_k} = \lambda_k \vec{n}_0(\mathrm{y}_k,\mathrm{t}_k) + \sum_i \tilde\psi_{ik}\vec{n}_i(\mathrm{y}_k,\mathrm{t}_k)$ and 
that $\lim_{k\to\infty}\vec{n}_{H_k}=\left[0:0:\ldots :0:\tilde\psi_1:\ldots :\tilde\psi_p\right]$,
where $\left(\tilde\psi_1,\ldots ,\tilde\psi_p\right)\neq \left(0,\ldots,0 \right)$. 
By assumption we have that $\vec{n}_{H_k}$  is the vector:

 \begin{equation*}\label{eq:t-reg}
\left(\lambda_k+\sum\limits_{i=1}^p\tilde\psi_{ik}\frac{\partial F_i}{\partial y_0}(\mathrm{y}_k,\mathrm{t}_k),    \sum\limits_{i=1}^p\tilde\psi_{ik}\frac{\partial F_i}{\partial y_1}(\mathrm{y}_k,\mathrm{t}_k),\ldots, \sum\limits_{i=1}^p\tilde\psi_{ik}\frac{\partial F_i}{\partial y_{n-1}}(\mathrm{y}_k,\mathrm{t}_k),-\tilde\psi_{1k},\ldots,-\tilde\psi_{pk}\right).
\end{equation*}
We may actually take $\lambda_k := -\sum_{i=1}^p\tilde\psi_{ik}\frac{\partial F_i}{\partial y_0}(\mathrm{y}_k,\mathrm{t}_k)$ and we have, after dividing out by  $\mu_k := \| (\tilde\psi_{1k},\ldots,\tilde\psi_{pk})\|$, that 
$\lim_{k\to\infty}\vec{n}_{H_k}=\left(0,0,\ldots ,0, \psi_1,\ldots , \psi_p\right)$ where $ \psi_{ik} := \frac{\tilde\psi_{ik}}{\mu_k}$ and  $\|( \psi_{1k},\ldots , \psi_{pk})\| =1$. This implies that: 
\begin{equation}\label{eq:y_j}
 \lim_{k\to\infty} \sum_{i=1}^p \psi_{ik}\frac{\partial F_i}{\partial y_j}(\mathrm{y}_k,\mathrm{t}_k) = 0, 
\end{equation}
for any $1\le j\le n-1$.

 By using (\ref{eq:partials}), this is equivalent to: 
\begin{equation}\label{eq:limF_i}
\lim_{k\to\infty} x_{nk}\sum_{i=1}^p  \psi_{ik} \frac{\partial f_i}{\partial x_j}(\mathrm{x}_k) = 0
\end{equation} 
and  one has   $| x_{nk} | \ge \frac{1}{\sqrt{n}}\|\mathrm{x}_k\|$ for large enough $k$.
  Therefore, in order to get the limit (\ref{eq:kos1}) it remains to prove that (\ref{eq:limF_i}) is true for $j = n$. The rest of our argument is devoted to this proof.

From the relations (\ref{eq:partials}) we get $x_n\frac{\partial f_i}{\partial x_n}(x) =  - \sum_{j=0}^{n-1}y_j\frac{\partial F_i}{\partial y_j}(y,t)$
and therefore:
\[ 
 \sum_{i=1}^p  \psi_{ik} x_{nk}\frac{\partial f_i}{\partial x_n}(\mathrm{x}_k) = - \sum\limits_{j=1}^{n-1}  \sum_{i=1}^p y_{jk} \psi_{ik}\frac{\partial F_i}{\partial y_j} (\mathrm{y}_k,\mathrm{t}_k)
- \sum_{i=1}^p   \psi_{ik}  y_{0k}\frac{\partial F_i}{\partial y_0 }(\mathrm{y}_k,\mathrm{t}_k).
\]

We show that both terms of the right hand side tend to zero.
First we have: 
  \[ \left| \sum_{j=1}^{n-1}  \sum_{i=1}^p y_{jk}  \psi_{ik}\frac{\partial F_i}{\partial y_j} (\mathrm{y}_k,\mathrm{t}_k) \right| \le \left\| \frac{\mathrm{x}_k}{x_{nk}}\right\| \left\| (\sum_{i=1}^p \psi_{ik}\frac{\partial F_i}{\partial y_1}(\mathrm{y}_k,\mathrm{t}_k),\ldots, \sum_{i=1}^p \psi_{ik}\frac{\partial F_i}{\partial y_{n-1}}(\mathrm{y}_k,\mathrm{t}_k))\right\| .\]
Since by hypothesis we have $|y_{jk}| = |\frac{x_{jk}}{x_{nk}}| \le 1$ for large enough $k$, we get from (\ref{eq:limF_i}) that the right hand side tends to 0 as $k\to \ity$.

Let us assume that the following inequality holds for large enough $k\gg 1$, the proof of which will be given below:
\begin{equation}\label{eq:gpeq}
\left\|\sum_{i=1}^p   \psi_{ik} y_{0k} \frac{\partial F_i}{\partial y_0}\right\| \ll \left\| ( \sum_{i=1}^p   \psi_{ik} \frac{\partial F_i}{\partial y_1},\ldots ,\sum_{i=1}^p   \psi_{ik} \frac{\partial F_i}{\partial y_{n-1}},\sum_{i=1}^p   \psi_{ik}\frac{\partial F_i}{\partial t_1},\ldots,\sum_{i=1}^p   \psi_{ik} \frac{\partial F_i}{\partial t_p} )\right\|.
\end{equation}

Then, by using (\ref{eq:y_j}), (\ref{eq:gpeq})  and the equality $\sum_{i=1}^p   \psi_{ik} \frac{\partial F_i}{\partial t_j} = -  \psi_{jk} $ for any $1\le j\le p$ (implied by (\ref{eq:partials})), we get:
\[
\left\| \sum_{i=1}^p  \psi_{ik} y_{0k} \frac{\partial F_i}{\partial y_0} \right\| \ll  \| \psi_{k} \|=1,
\]
which shows that $\lim_{k\to\infty}  \sum_{i=1}^p  \|  \psi_{ik} y_{0k} \frac{\partial F_i}{\partial y_0}  (\mathrm{y}_k,\mathrm{t}_k) \| = 0$. This completes our proof of the relation  (\ref{eq:kos1}) showing that $p_0$ is not $KOS$-regular.

\smallskip

Let us now give the proof of (\ref{eq:gpeq}). If this were not true,  there exists $\delta >0$ such that for $k\gg 1$ one has:
\begin{equation}\label{eq:eqy0} \frac{\left\|\sum_{i=1}^p   \psi_{ik} y_{0k} \frac{\partial F_i}{\partial y_0} (\mathrm{y}_k,\mathrm{t}_k)\right\|} {\left\| (\sum_{i=1}^p   \psi_{ik} \frac{\partial F_i}{\partial y_1},\ldots ,\sum_{i=1}^p   \psi_{ik} \frac{\partial F_i}{\partial y_{n-1}},\sum_{i=1}^p    \psi_{ik}\frac{\partial F_i}{\partial t_1},\ldots,\sum_{i=1}^p   \psi_{ik} \frac{\partial F_i}{\partial t_p} ) (\mathrm{y}_k,\mathrm{t}_k)\right\|}>\delta .
\end{equation} 
Then the set $\mathcal{W}=\{ ((y,t),\psi)\in ((U_n \cap U_0)\times\bR^p\times\bR^p) \cap (\bX \times S_1^{p-1})\mid \mbox{(\ref{eq:eqy0}) holds for } ((y,t),\psi) \}$ is a semi-algebraic set. We have $((\mathrm{y}_{k},\mathrm{t}_k), \psi_{k})\in \mathcal{W}$ for $k\gg 1$, thus  $(p_0,\psi)\in\overline{\mathcal{W}}$. Then, by the curve selection lemma \cite[p. 25]{Mi} there exists an analytic curve $\lambda=(\phi,\psi):[0, \varepsilon [ \to \overline{\mathcal{W}}$ such that $\lambda(]0, \varepsilon[ ) \subset\mathcal{W}$ and $\lambda(0) =(p_0,\psi$). We denote $\phi(s)=(y_0(s),y_1(s),\ldots,y_{n-1}(s),t_1(s),\ldots,t_p(s))$ and $\psi(s)=(\psi_1(s),\ldots,\psi_p(s))$. Since $F(\phi(s))\equiv 0$, we have: 
\begin{equation*}
0=\frac{d}{ds}F(\phi(s))=y'_0(s)\frac{\partial F}{\partial y_0}(\phi(s))+\sum_{i=1}^{n-1} y'_i(s)\frac{\partial F}{\partial y_i}(\phi(s))+\sum_{i=1}^{p}t'_i(s)\frac{\partial F}{\partial t_i}(\phi(s)). 
\end{equation*}

Multiplying by $\psi(s)$ we obtain:
\begin{equation*}\label{eq:Fpsi}
-y'_0(s)\sum\limits_{i=1}^p\psi_i(s)\frac{\partial F_i}{\partial y_0}(\phi(s))=\sum_{j=1}^{n-1}y'_j(s)\sum_{i=1}^p\psi_i(s)\frac{\partial F_i}{\partial y_j}(\phi(s)) +\sum_{j=1}^pt'_j(s)\sum_{i=1}^p\psi_i(s)\frac{\partial F_i}{\partial t_j}(\phi(s)).
\end{equation*}
Since $\phi$ is analytic, thus bounded at $s=0$, by applying the Cauchy-Schwarz inequality one finds a constant $C >0$ such that:
\begin{multline}\label{eq:eqy02}
\left|  (y'_0(s) \sum_{i=1}^p\psi_i(s) \frac{\partial F_i}{\partial y_0}(\phi(s)) \right| \le  \\ C  \left\| (\sum_{i=1}^p  \psi_{i}    \frac{\partial F_i}{\partial y_1}(\phi  ),\ldots ,\sum_{i=1}^p  \psi_{i}   \frac{\partial F_i}{\partial y_{n-1}}(\phi  ),\sum_{i=1}^p  \psi_{i}  \frac{\partial F_i}{\partial t_1}(\phi   ),\ldots,\sum_{i=1}^p  \psi_{i}  \frac{\partial F_i}{\partial t_p} (\phi  ))(s)\right\|.
\end{multline}
 We have $l :=\mathrm{ord}_s y'_0(s) \ge 0$ and $\ord_s y_0(s)=l+1 \ge 1$ since $y_0(0)=0$, thus   $\left|y_0(s)\sum_{i=1}^p\psi_i(s)\frac{\partial F_i}{\partial y_0}(\phi(s))\right| \ll \left|y'_0(s)\sum_{i=1}^p\psi_i(s)\frac{\partial F_i}{\partial y_0}(\phi(s))\right|$, which, together with (\ref{eq:eqy02}), gives:
\begin{multline*}
\left\|\sum_{i=1}^p  \psi_{i}(s) y_0(s) \frac{\partial F_i}{\partial y_0}(\phi(s))\right\|\ll\\ \left\| (\sum_{i=1}^p  \psi_{i} \frac{\partial F_i}{\partial y_1}(\phi),\ldots ,\sum_{i=1}^p  \psi_{i} \frac{\partial F_i}{\partial y_{n-1}}(\phi),\sum_{i=1}^p  \psi_{i} \frac{\partial F_i}{\partial t_1}(\phi),\ldots,\sum_{i=1}^p  \psi_{i}  \frac{\partial F_i}{\partial t_p}(\phi) )(s)\right\|.
\end{multline*}

This contradicts our assumption that $(\phi(s),\psi(s))\in\mathcal{W}$, for $s\in\, ]0,\varepsilon[$.

\smallskip 
\noindent
The equivalence (b)$\Leftrightarrow$(c) has been proved by Jelonek \cite{Je-man}, \cite{Je} in a slightly more general setting.

\end{proof}


\section{Integral closure interpretation of \texorpdfstring{$t$}{t}-regularity and Malgrange-Gaffney Condition}\label{s:malgrange}\label{ss:intcl_interpret_t-reg}


In the setting of complex polynomial mappings $f:\bC^n\to\bC^p$, one observed that $t$-regularity has an integral closure interpretation,  cf \cite[Remark 2.9]{Ti-imrn} for $p=1$ and  \cite{Ga-infinity} for $p\ge 1$. Actually Gaffney  described in terms of the integral closure of modules a non-characteristic condition for $p\ge 2$ \cite[Def.1]{Ga-infinity} which turns out to be equivalent to the $t$-regularity. His technique works under the supplementary assumption that $f$ is ``fair'', see below. 
Let us first give the  interpretation of the $t$-regularity of polynomial mappings $f:\bR^n\to\bR^p$ which are fair. We then prove the real counterpart of Gaffney's result \cite[Theorem 17]{Ga-infinity}.

\subsection{Real integral closure of modules}\label{ss:i.c.}
Let us denote by $\mathcal{A}_n$ the local ring of real analytic function germs at the origin in $\bR^n$ and by $\mathcal{A}^p_n$ the free $\mathcal{A}_n$-module of rank $p$. One  denotes by $\mathcal{A}_{X,x}$ the local ring of real analytic function germs on the real analytic space germ $(X,x)$ and by $\mathcal{A}_{X,x}^p$ the free $\mathcal{A}_{X,x}$-module of rank $p$.

\begin{definition}\label{d:i.c.}\cite[Def. 4.1]{Gaintclosure} Let $(X,x)\subset (\bR^n, x) $ be a real analytic germ and let $M$ be a $A_{X,x}$-submodule of $\mathcal{A}^p_{X,x}$. The {\em real integral closure of} $M$, denoted by $\overline{M}$, is the set of elements $h\in\mathcal{A}^p_{X,x}$ such that for any analytic path  $\phi:(\bR,0)\to(X,x)$, we have $h\circ\phi\in \mathcal{A}_1(\phi^*(M))$, where $\mathcal{A}_1(\phi^*(M))$ denotes the $\mathcal{A}_1$-submodule of $\mathcal{A}^p_1$ generated by the elements $w\circ\phi$, $\forall w\in M$.
\end{definition}

In the complex setting one has some equivalent definitions, see for example \cite[Prop. 1.7, 1.11]{Gaintclosure}, which hold in the real setting except of \cite[Prop. 1.7]{Gaintclosure} where it is necessary to assume that the regular points of $X$ are dense in $X$, see \cite[p. 318]{Gaintclosure}.

There is an interpretation of the conormal in terms of the integral closure of modules in the complex setting, see for instance \cite{Gaaureoles,GTW}. The next result was formulated in \cite[Prop. 4]{Ga-infinity} in the complex setting but one can show that it holds over the reals if one assumes that the regular points of $X$ are dense, see \cite{R-thesis} for the details and generalization. Under the notations of \S \ref{ss:conormalspaces} one has:

\begin{lemma}\label{p:hyperplane_limits}
Let $(X,x)\subset(\bR^m,x)$ be an equidimensional real analytic germ defined by a mapping germ $\tilde F : (\bR^m,x)\to (\bR^p,0)$. Suppose that the regular points of $X$ are dense in $X$. Let $g:(\bR^m,x)\to \bR$ and denote  $G := (\tilde F,g)$. Let $V\subset \bR^m$ be a linear subspace. Then the following are equivalent:
\begin{enumerate}
\rm \item \it There exist $H\in C_{g,x}(X)$ such that $H\supset V$.
\rm \item \it $\overline{JM_X(G)_V}\subsetneq \overline{ JM_X(G)}$, where $JM_X(G)_V$ denotes the $\mathcal{A}_{X,x}$-submodule  of $\mathcal{A}_{X,x}^{p+1}$ generated by $\left\{v_1\frac{\partial G}{\partial z_1}+ \ldots+ v_m \frac{\partial G}{\partial z_m};\forall v=(v_1,\ldots,v_m)\in V \right \}$, and $JM_X(G)$ denotes the $\mathcal{A}_{X,x}$-submodule  of $\mathcal{A}_{X,x}^{p+1}$ generated by $\left\{ \frac{\partial G}{\partial z_1},\ldots,\frac{\partial G}{\partial z_m}\right\}$.
\end{enumerate}
\fin
\end{lemma}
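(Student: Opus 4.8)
The plan is to reduce Lemma \ref{p:hyperplane_limits} to the complex version proved by Gaffney \cite[Prop. 4]{Ga-infinity}, using the density of regular points to make the curve-selection arguments work over $\bR$ exactly as they do over $\bC$. First I would recall that the relative conormal $C_g(X)$ is defined as the closure of the incidence set over $X_0$, the locus of $X_{\reg}$ where $g$ is a submersion; since the regular points of $X$ are dense, $X_0$ is dense in $X$ (away from the critical locus of $g$), so that a fibre $C_{g,x}(X) = \pi^{-1}(x)$ consists exactly of limits of tangent hyperplanes $T_{z}(g^{-1}(g(z)))$ along sequences $z\to x$ in $X_0$. By the curve selection lemma \cite[p. 25]{Mi} in the semi-algebraic (or subanalytic) setting, such a limit is realized along an analytic arc $\phi:(\bR,0)\to (X,x)$ with $\phi(]0,\varepsilon[)\subset X_0$.

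The core of the argument is the translation between ``$H\supset V$ for some $H\in C_{g,x}(X)$'' and the integral-closure inequality. Working along an analytic arc $\phi$ as above, the tangent space to the fibre of $G=(\tilde F,g)$ restricted to $X$ at $\phi(s)$ is the kernel of the Jacobian of $G$, and the condition that the limit hyperplane $H$ contains the fixed subspace $V$ is equivalent — after pulling back by $\phi$ and using that $\phi$ is analytic hence has well-defined orders — to the statement that the module elements $v_1\partial G/\partial z_1 + \cdots + v_m\partial G/\partial z_m$, for $v\in V$, all lie in a submodule of strictly higher order than the full Jacobian module $JM_X(G)$ generated by all the partials. This is precisely the arc-wise criterion for $\overline{JM_X(G)_V}\subsetneq \overline{JM_X(G)}$ via Definition \ref{d:i.c.}: by that definition an element lies in the integral closure iff its pullback along every analytic arc lies in the pulled-back module, and the strict inclusion of integral closures is detected by the existence of a single arc along which the orders separate. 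Here the only place where one needs the density hypothesis again is to ensure that testing on arcs through the dense open $X_0$ suffices to compute the integral closures on $(X,x)$ — this is exactly the subtlety flagged in the excerpt regarding \cite[Prop. 1.7]{Gaintclosure}, which requires the regular points to be dense.

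Concretely I would organize the proof as: (i) fix the arc $\phi$ realizing a given limit hyperplane (for (a)$\Rightarrow$(b)) or realizing the order inequality (for (b)$\Rightarrow$(a)), invoking curve selection; (ii) compute, along $\phi$, the normal covector to $T_{\phi(s)}(G|_X = \const.)$ as a limit of linear combinations $\sum_i a_i(s)\, \partial G/\partial z_i$ and express $H\supset V$ as the vanishing, to positive order, of the pairing of this covector with every $v\in V$; (iii) identify this with the membership $h\circ\phi \in \mathcal{A}_1(\phi^*(\cdot))$ of Definition \ref{d:i.c.}, applied to the generators $v_1\partial G/\partial z_1+\cdots+v_m\partial G/\partial z_m$ of $JM_X(G)_V$ versus those of $JM_X(G)$; (iv) conclude the strict inclusion $\overline{JM_X(G)_V}\subsetneq\overline{JM_X(G)}$, and conversely. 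Throughout, the equidimensionality of $(X,x)$ guarantees that $\dim T_{\phi(s)} X$ is constant along $\phi$ so the Jacobian module has the expected rank and the ``generic'' tangent hyperplane is well-defined.

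The main obstacle I expect is step (iii)–(iv): making rigorous that the integral closure of modules over the real analytic local ring $\mathcal{A}_{X,x}$ is genuinely detected by analytic arcs through $X_0$, and that no real phenomena (e.g. arcs escaping into the singular or non-submersive locus, or sign issues that are invisible over $\bC$) spoil the equivalence. This is handled by quoting \cite[Prop. 1.7, 1.11]{Gaintclosure} under the density assumption, together with the detailed verification in \cite{R-thesis} that Gaffney's complex proof of \cite[Prop. 4]{Ga-infinity} transfers verbatim once regular points are dense; the rest is bookkeeping with orders of vanishing of analytic functions of one variable, which is routine.
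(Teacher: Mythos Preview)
Your proposal aligns with the paper's own treatment: the paper does not give a self-contained proof of this lemma but states that it is the real counterpart of \cite[Prop.~4]{Ga-infinity}, valid once regular points are assumed dense, and refers to \cite{R-thesis} for the details. Your sketch---reducing to Gaffney's complex argument via curve selection and the arc-wise characterization of real integral closure (Definition~\ref{d:i.c.}, \cite[Prop.~1.7, 1.11]{Gaintclosure}), with the density hypothesis ensuring that arcs through $X_0$ suffice---is exactly the route the paper indicates.
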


\subsection{The ``fair'' condition}\label{ss:fair}

Let $f =(f_1, \ldots , f_p) :\bR^n\to\bR^p$ be a polynomial mapping  and let $\tilde{f_i}(x_0,x_1,\ldots,x_n)$ denote the homogenization of $f_i$ of degree $d_i := \deg f_i$, for $ 1\leqslant i\leqslant p$.
Let:
\[ \cZ =\bigcap_{i=1}^p \left\{ \tilde F_i(x_0,x_1,\ldots,x_{n},t_1,\ldots,t_p) :=\tilde{f}_i(x_0,x_1,\ldots,x_{n})-t_ix_{0}^{d_i}=0\right\} \subset \bP^n \times \bR^p\] 
and compare to the definition of $\bX$ in \S \ref{s:t-reg} and to the notations in \S \ref{ss:t-reg-interpret}.

One notices the inclusion $\bX\subset \cZ$ and the equality $\bX \m H^\ity \times \bR^p = \cZ \m H^\ity \times \bR^p$. 
At any $p_0 \in \bX^\ity$ we may use  the equations of $\cZ$ in the open subspace $\cZ \m H^\ity \times \bR^p \simeq \bR^n$.
 However, one does not have the equality $\bX = \cZ$ in general. Gaffney works in \cite{Ga-infinity} with the space $\cZ$ assuming the equality $\cZ=\bX$ which he translates by ``\textit{$f$ is fair}'' \cite[p. 158]{Ga-infinity}. This is imposed by the theory of integral closure of modules since ``fair'' implies that the regular points of $\cZ$ are dense in $\cZ$ and that $\cZ$ is equidimensional analytic. See  Example \ref{ex:fair} in which $\bX\not= \cZ$ and therefore
this technique does not apply, whereas the statement and proof of the equivalence result Theorem \ref{t:main} hold true.

\subsection{\texorpdfstring{$t$}{t}-regularity and fair polynomial mappings}\label{ss:i.c.xfair}

Let $f =(f_1, \ldots , f_p) :\bR^n\to\bR^p$ be a fair polynomial mapping. As in \S  \ref{ss:char_covect}, we consider the charts $U_j\times\bR^p$ of $\bP^n\times\bR^p$, where $U_j=\{x_j\neq0\}$, $j=0,1,\ldots,n$ and we identify the chart $U_0$ with the affine space $\bR^n$. 

Let $p_0\in\bX^\infty$. Up to  some linear change of coordinate one may assume that $p_0\in \bX^\infty\cap (U_n\times\bR^p)$. In the chart  $U_n\times \bR^p$, one considers the change of coordinates  $y_0=x_0/x_n, \ldots,y_{n-1}= x_{n-1}/x_n$. In this coordinate system and since $f$ is fair, one has \[\bX\cap (U_n\times\bR^p)=\bigcap_{i=1}^p \left\{\tilde F_i(y,t)=\tilde f_i(y_0,y_1,\ldots,y_{n-1},1)-t_iy_0^{d_i}=0\right\}.\]

\begin{proposition}\label{p:intclosure_interpret}
Let $f:\bR^n\to\bR^p$ be a fair polynomial map. Then $f$ is $t$-regular at $p_0\in \bX^\ity$ if and only if
one of the following equivalent conditions is satisfied:
\begin{eqnarray}\label{eq:integralcl}
\partial \tilde F/\partial t_i\in\overline{\{\partial \tilde F/\partial y_1,\ldots,\partial \tilde F/\partial y_{n-1}\}}, \ \ \forall i=1,\ldots,p.
\end{eqnarray}
\begin{equation}\label{eq:integralcl+}
\partial \tilde F/\partial t_i\in\overline{\{y_0\partial \tilde F/\partial y_0, \partial \tilde F/\partial y_1,\ldots,\partial \tilde F/\partial y_{n-1}\}}, \ \ \forall i=1,\ldots,p.
\end{equation}
\end{proposition}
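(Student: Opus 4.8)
\textbf{Proof plan for Proposition \ref{p:intclosure_interpret}.}

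The plan is to interpret $t$-regularity via the relative conormal space $\cC^\ity_{p_0}$ and then translate the non-emptiness of the intersection $C_\tau(\bP^n\times\bR^p)\cap \cC^\ity_{p_0}$ into the language of integral closure of modules using Lemma \ref{p:hyperplane_limits}. Concretely, I would work in the chart $U_n\times\bR^p$ with the coordinates $(y_0,\ldots,y_{n-1},t_1,\ldots,t_p)$ introduced in \S\ref{ss:i.c.xfair}, where the fairness hypothesis guarantees that $\bX\cap(U_n\times\bR^p)=\cZ\cap(U_n\times\bR^p)$ is defined by the vanishing of $\tilde F=(\tilde F_1,\ldots,\tilde F_p)$, is equidimensional analytic, and has dense regular locus --- precisely the hypotheses needed to invoke Lemma \ref{p:hyperplane_limits}. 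The function $g$ there is the projection $y_0$ (equivalently, $x_0/x_n$), so in the notation of that lemma $G=(\tilde F, y_0)$ and $\cC^\ity_{p_0}=C_{g,p_0}(\bX)$ by Definition \ref{d:car_covectors_space}.

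The key step is to recognize that $f$ fails to be $t$-regular at $p_0$ exactly when there is a limit hyperplane $H\in\cC^\ity_{p_0}=C_{g,p_0}(\bX)$ which contains the subspace $V$ of directions tangent to the fibres of $\tau$, i.e. $V=\{y_0=0\}$ in the $(y,t)$-chart (the $t$-coordinates span the fibre directions of $\tau$, while $\partial/\partial y_0$ is the extra direction that distinguishes $C_\tau$ from $C_g$; here one uses $n>p$ so that $V$ is a proper nonzero subspace and the geometry of Definition \ref{d:t-reg} applies). Applying Lemma \ref{p:hyperplane_limits} with this $V$, non-$t$-regularity becomes $\overline{JM_\bX(G)_V}\subsetneq\overline{JM_\bX(G)}$. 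Now $JM_\bX(G)$ is generated by $\partial G/\partial y_0,\ldots,\partial G/\partial y_{n-1}$, while $JM_\bX(G)_V$ is generated by $\partial G/\partial y_1,\ldots,\partial G/\partial y_{n-1}$ (the $y_0$-direction being excluded), and one checks that the last coordinate of $G$, namely $y_0$, contributes nothing essential to the comparison of integral closures because the $(p+1)$-st entry of each generator is constant. Thus $t$-regularity $\iff \overline{JM_\bX(G)_V}=\overline{JM_\bX(G)}$, which (since $JM_\bX(G)_V\subset JM_\bX(G)$ always) is equivalent to $\partial G/\partial y_0\in\overline{JM_\bX(G)_V}$. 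Unwinding this membership in the free module $\mathcal{A}^{p+1}_{\bX,p_0}$ and using $\partial\tilde F_i/\partial t_j=-\delta_{ij}y_0^{d_i}$ (so the $\tau$-directions are recovered from the $t$-partials), I would extract condition \eqref{eq:integralcl}; and the passage between \eqref{eq:integralcl} and \eqref{eq:integralcl+} follows because, for the purpose of taking integral closure, adjoining the generator $y_0\,\partial\tilde F/\partial y_0$ does not change $\overline{\{\partial\tilde F/\partial y_1,\ldots,\partial\tilde F/\partial y_{n-1}\}}$ --- indeed $y_0\,\partial\tilde F/\partial y_0$ already lies in that integral closure by a standard Rees/valuation argument (test along every analytic arc, use $\ord_s y_0(s)\ge 1$ as in the proof of Theorem \ref{t:main}).

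The main obstacle I anticipate is the careful bookkeeping of the extra coordinate direction: the relative conormal $C_g$ records limits of tangent hyperplanes to fibres of $g=y_0$, whereas $C_\tau$ records limits of tangent hyperplanes to fibres of $\tau$, and Definition \ref{d:t-reg} asks for their intersection along $\cC^\ity_{p_0}$; I must be precise about which linear subspace $V$ to feed into Lemma \ref{p:hyperplane_limits} and verify that ``$H\in\cC^\ity_{p_0}$ with $H\supset V$'' is genuinely equivalent to ``$H\in C_\tau(\bP^n\times\bR^p)\cap\cC^\ity_{p_0}$''. The second delicate point is justifying that the last component $g=y_0$ of $G$ may be ignored when comparing $\overline{JM_\bX(G)_V}$ with $\overline{JM_\bX(G)}$ --- this requires noting that the $(p+1)$-st entries of $\partial G/\partial y_1,\dots,\partial G/\partial y_{n-1}$ all vanish while that of $\partial G/\partial y_0$ equals $1$, so the module comparison in the first $p$ entries is what matters, and one must check no information is lost. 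Once these two points are set up correctly, the equivalence with \eqref{eq:integralcl} and then \eqref{eq:integralcl+} is a direct application of the arc-wise criterion in Definition \ref{d:i.c.} together with the computation \eqref{eq:partials} adapted to the homogenized equations $\tilde F_i$.
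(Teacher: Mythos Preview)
Your overall strategy---invoke Lemma \ref{p:hyperplane_limits} with $G=(\tilde F,g)$ and a suitable linear subspace $V$, then read off the integral-closure condition---is exactly the paper's, but you have the subspace $V$ backwards, and this derails the computation. Since $\tau:\bP^n\times\bR^p\to\bR^p$ is the projection onto the $t$-coordinates, the fibres of $\tau$ are $\bP^n\times\{t_0\}$ and their tangent directions are the $y$-directions, not the $t$-directions. Hence the correct choice is $V=\bR^n\times 0$ (all $y$-directions, including $\partial/\partial y_0$), so that $H\in C_\tau(\bP^n\times\bR^p)$ means precisely $H\supset V$. With this $V$ one gets $JM_\bX(G)_V=\langle\partial G/\partial y_0,\ldots,\partial G/\partial y_{n-1}\rangle$, while the full Jacobian module $JM_\bX(G)$ is generated by \emph{all} partials $\partial G/\partial y_0,\ldots,\partial G/\partial y_{n-1},\partial G/\partial t_1,\ldots,\partial G/\partial t_p$ (you omitted the $t$-partials). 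Lemma \ref{p:hyperplane_limits} therefore gives: $t$-regularity $\iff \partial G/\partial t_i\in\overline{\{\partial G/\partial y_0,\ldots,\partial G/\partial y_{n-1}\}}$ for every $i$---not a condition on $\partial G/\partial y_0$ as you wrote.

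The role of the extra $(p{+}1)$-st component of $G$ is then the opposite of what you describe: in the arc-wise test for $\partial G/\partial t_i=\lambda_0\,\partial G/\partial y_0+\sum_{j\ge 1}\lambda_j\,\partial G/\partial y_j$, the last entries read $0=\lambda_0\cdot 1+\sum_{j\ge 1}\lambda_j\cdot 0$, which \emph{forces} $\lambda_0\equiv 0$ and hence eliminates $\partial\tilde F/\partial y_0$ from the right-hand side, yielding (\ref{eq:integralcl}) directly. Your remark that the $(p{+}1)$-st entries are ``constant'' misses that they are different constants ($1$ versus $0$), and it is precisely this discrepancy that does the work. Finally, your sketch for the equivalence (\ref{eq:integralcl})$\Leftrightarrow$(\ref{eq:integralcl+}) via an arc argument with $\ord_s y_0(s)\ge 1$ is on the right track and is essentially the Gaffney--Parusi\'nski step the paper cites; note however that one does not show $y_0\,\partial\tilde F/\partial y_0\in\overline{\{\partial\tilde F/\partial y_j\}_{j\ge 1}}$ unconditionally, but rather uses the relation $\tfrac{d}{ds}\tilde F(\phi(s))=0$ together with the assumed membership (\ref{eq:integralcl+}) for all $i$ simultaneously to bootstrap to (\ref{eq:integralcl}).
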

\begin{proof}
Let $V :=\bR^n\times 0\subset\bR^n\times\bR^p$. By Definitions \ref{d:car_covectors_space} and \ref{d:t-reg}, $p_0$ is a $t$-regular point if and only if  there are no hyperplanes $H \supset V$ such that $H \in \cC^\ity_{p_0}$.  By Lemma \ref{p:hyperplane_limits} applied for $G= (\tilde F, g)$ and $V$, this is equivalent to the following: 

\begin{eqnarray}\label{eq:closureG}
\frac{\partial G}{\partial t_i}\in \overline{\left\{\frac{\partial G}{\partial y_0},\frac{\partial G}{\partial y_1},\ldots,\frac{\partial G}{\partial y_{n-1}}\right\}}, \mbox{ for $i=1,\ldots,p$}.
\end{eqnarray}

By Definition \ref{d:i.c.}, (\ref{eq:closureG}) means that for any $\phi:(\bR,0)\to (\bX,p_0)$, there exist  $\lambda_0, \lambda_1,\ldots,$ $\lambda_{n-1}\in \mathcal{A}_1$ such that:

\[ \left( \begin{array}{c}
  \frac{\partial \tilde F}{\partial t_i} (\phi(s))\\ 
0 
   \end{array}
 \right)=\lambda_0(s)\left(
   \begin{array}{c}
      \frac{\partial\tilde F}{\partial y_0}(\phi(s)) \\ 
1 
  \end{array}
     \right)+ \lambda_1 (s)\left(
    \begin{array}{c}
     \frac{\partial\tilde F}{\partial y_1}(\phi(s)) \\ 
0 
      \end{array}
  \right)+ \cdots + \lambda_{n-1}(s) \left(
     \begin{array}{c}
      \frac{\partial\tilde F}{\partial y_{n-1}}(\phi(s)) \\ 
0 
       \end{array} \right)
 \]
\noindent
which is in turn equivalent to (\ref{eq:integralcl}). 

That (\ref{eq:integralcl}) implies (\ref{eq:integralcl+}) is obvious. The converse is the same as Gaffney's proof of \cite{Ga-infinity}, holds over the reals too and is based on Parusinski's proof \cite[Lemma 3.2]{Pa1}.\end{proof}


\begin{theorem}\label{t:t-reg=MalGaff}
 A fair polynomial mapping $f:\bR^n\rightarrow\bR^p$ satisfies the Malgrange-Gaffney condition at
$p_0\in\bX^{\infty}$ if and only if $f$
is $t$-regular at this point.
\end{theorem}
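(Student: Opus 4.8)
\textbf{Proof plan for Theorem \ref{t:t-reg=MalGaff}.}

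The plan is to reduce the statement to the combination of two results already at our disposal: Proposition \ref{p:intclosure_interpret}, which gives an integral-closure reformulation of the $t$-regularity for fair polynomial mappings, and a direct translation of the Malgrange-Gaffney inequality into the language of integral closure of modules. The bridge between the two is the standard fact (the real analogue of Gaffney's criterion, cf.\ the real integral closure of Definition \ref{d:i.c.} via analytic paths, and \cite[Lemma 3.2]{Pa1}) that membership $h\in\overline M$ of a module element in the integral closure of a submodule $M\subset \mathcal A^q_{X,x}$ is equivalent to a \L ojasiewicz-type inequality $\|h(z)\|\le C\,\|M(z)\|$ on a neighbourhood of $x$ in $X$, where $\|M(z)\|$ denotes the norm of the vector of generators evaluated at $z$. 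So the whole argument is: rewrite both conditions as such inequalities and check they coincide.

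First I would work in the chart $U_n\times\bR^p$ around $p_0$ with the coordinates $y_0=x_0/x_n,\dots,y_{n-1}=x_{n-1}/x_n$ and the equations $\tilde F_i(y,t)=\tilde f_i(y_0,\dots,y_{n-1},1)-t_iy_0^{d_i}$, exactly as in \S\ref{ss:i.c.xfair}. By Proposition \ref{p:intclosure_interpret}, $t$-regularity at $p_0$ is equivalent to condition (\ref{eq:integralcl}), i.e.\ $\partial\tilde F/\partial t_i\in\overline{\{\partial\tilde F/\partial y_1,\dots,\partial\tilde F/\partial y_{n-1}\}}$ for all $i$; since $\partial\tilde F_j/\partial t_i=-\delta_{ij}y_0^{d_i}$, the vectors $\partial\tilde F/\partial t_i$ together span, up to the invertible-on-$U_n$ factors $y_0^{d_i}$ near points where $y_0\neq 0$ and with the appropriate care on $\bX^\infty$ where $y_0=0$, the ``target directions''. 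Applying the path/\L ojasiewicz characterization of real integral closure, (\ref{eq:integralcl}) is then equivalent to the existence of $C>0$ and a neighbourhood of $p_0$ in $\bX$ on which
\[
\Bigl(\sum_{i=1}^p \|\tfrac{\partial\tilde F}{\partial t_i}\|^2\Bigr)^{1/2}\ \le\ C\,\Bigl(\sum_{j=1}^{n-1}\|\tfrac{\partial\tilde F}{\partial y_j}\|^2\Bigr)^{1/2}.
\]
Next I would carry out the bookkeeping that identifies this inequality, after the change of variables and after pulling out the powers of $x_0$ coming from the homogenizations, with the Malgrange-Gaffney inequality (\ref{eq:malg.}): the numerator $\bigl(\sum_I\|M_I(f)\|^2\bigr)^{1/2}$ corresponds to the $\partial\tilde F/\partial t_i$ block (these minors are, up to sign and a monomial in $x_0$, the $p\times p$ minors of the Jacobian of $\tilde F$ in the $t$-variables against the $y$-variables), while $\bigl(\sum_{J,j}\|M_J(j,f)\|^2\bigr)^{1/2}$ corresponds to the $\partial\tilde F/\partial y_j$ block; the extra factor $\|\mathrm x_k\|\sim|x_{nk}|$ in (\ref{eq:malg.}) is exactly the factor absorbed by passing from the $x$-coordinates to the $y$-coordinates, as recorded in the relations (\ref{eq:partials}). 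Localizing at $p_0$ (property $(L_2)$) matches localizing the integral closure membership at $p_0$.

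The main obstacle I expect is \emph{not} the algebra of minors but the two points where the dictionary is delicate: (i) transporting the \L ojasiewicz-inequality characterization of integral closure to the \emph{real} analytic setting — this is where the fairness hypothesis is genuinely used, since it guarantees (via \S\ref{ss:fair}) that $\cZ=\bX$ is equidimensional analytic with dense regular locus, exactly the hypotheses needed for Lemma \ref{p:hyperplane_limits} and for the real versions of the integral-closure statements cited after Definition \ref{d:i.c.}; and (ii) controlling the behaviour along $\bX^\infty$ where $y_0=0$, so that the factors $y_0^{d_i}$ that distinguish the module generated by the $\partial\tilde F/\partial t_i$ from the module generated by the $\partial\tilde F/\partial t_i$ do not affect the integral closure — here one invokes the Lemma \ref{l:unit}-type invariance (integral closure of a module is unchanged by multiplying a generator by a unit, but $y_0^{d_i}$ vanishes on $\bX^\infty$, so the honest statement is that (\ref{eq:integralcl}) already packages this correctly, which is precisely what Proposition \ref{p:intclosure_interpret} asserts). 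Once these two points are granted, the equivalence ``$t$-regular $\Longleftrightarrow$ integral-closure condition $\Longleftrightarrow$ \L ojasiewicz inequality $\Longleftrightarrow$ Malgrange-Gaffney'' is a chain of equivalences, and the proof concludes.
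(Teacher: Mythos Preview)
Your overall strategy---reduce $t$-regularity to the integral-closure condition of Proposition \ref{p:intclosure_interpret}, then translate that condition into a \L ojasiewicz-type inequality and identify it with the Malgrange--Gaffney inequality---is exactly the route the paper takes. The gap is in the middle step: the displayed inequality
\[
\Bigl(\sum_{i=1}^p \|\tfrac{\partial\tilde F}{\partial t_i}\|^2\Bigr)^{1/2}\ \le\ C\,\Bigl(\sum_{j=1}^{n-1}\|\tfrac{\partial\tilde F}{\partial y_j}\|^2\Bigr)^{1/2}
\]
is \emph{not} a characterization of the module integral-closure condition (\ref{eq:integralcl}) when $p>1$. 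For ideals ($p=1$) the \L ojasiewicz bound $|h|\le C\max|g_i|$ does characterize $h\in\overline{(g_i)}$, but for submodules of $\mathcal A^p_{X,x}$ a column-norm inequality is strictly weaker than integral closure (e.g.\ $(y,0)$ satisfies $\|(y,0)\|\le\sqrt{x^2+y^2}$ but is not in the integral closure of $\langle(x,0),(0,y)\rangle$). Consequently your ``bookkeeping'' paragraph cannot work as stated: the Malgrange--Gaffney condition is a ratio of $p\times p$ minors to $(p-1)\times(p-1)$ minors, not of column norms, and your inequality simply does not produce those minors.

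The paper fixes this by inserting the correct intermediate step: it invokes \cite[Prop.~1.7]{Gaintclosure} to reformulate (\ref{eq:integralcl+}) as an \emph{ideal} membership for the minors, namely $y_0^{d_j}M_J(j,\tilde F)\in\overline{\langle M_I(\tilde F)\rangle}$, and only then applies the \L ojasiewicz criterion \cite[Prop.~4.2]{Gaintclosure} at the level of ideals to obtain a sup-inequality on minors. Two further points you would need to address: (i) the paper works from (\ref{eq:integralcl+}), not (\ref{eq:integralcl}), because the generator $y_0\,\partial\tilde F/\partial y_0$ is what accounts, after the change of variables, for the radial derivative $\sum_l x_l\,\partial f/\partial x_l$---and hence for the $\partial f/\partial x_n$ column that the Malgrange--Gaffney minors require; (ii) the final identification with (\ref{eq:malg.}) is not immediate but goes through the modified minors $M'_I(f)$, $M'_J(j,f)$ of the paper (one column replaced by the radial combination), and one needs $|x_n|\ge|x_j|$ to pass from these back to the unprimed minors. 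None of this is deep, but it is precisely the algebra you labelled ``not the obstacle,'' and without it the chain of equivalences is broken at the second link.
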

\begin{proof}
That $f$ is $t$-regular at $p_0$ is equivalent to (\ref{eq:integralcl+}) of Proposition \ref{p:intclosure_interpret}. In turn, by  \cite[Prop. (1.7)]{Gaintclosure} which hold over $\bR$ too since $f$ is fair, see \cite[p. 318]{Gaintclosure} and \cite{R-thesis}, this is equivalent to:
\begin{equation}\label{eq:i.c.xmalg}
 y^{d_j}_0M_J(j,\tilde F)\in\overline{\langle M_I(\tilde F)\rangle}, j=1,\ldots,p,
\end{equation}
where $ \langle M_I(\tilde F)\rangle$ denotes the ideal generated by the $p\times p$ minors of the matrix whose columns are $(y_0\partial \tilde F/\partial y_0, \partial \tilde F/\partial y_1,\ldots,\partial \tilde F/\partial y_{n-1})$, and $M_J(j,\tilde F)$ is a maximal minor of the same matrix  with the $j$th row deleted.

Using \cite[Prop 4.2]{Gaintclosure}, one has that (\ref{eq:i.c.xmalg}) is equivalent to the existence of $C >0$ such that:
\[
 \sup_{J,j}\|y_0^{d_j}\|\|M_J(j,\tilde F)(y,t)\|\leq C\sup_I\|M_I(\tilde F)(y,t)\|. 
\]
Dividing both sides by $\|y_0^k\|$, where $k=\sum\limits_{l=1}^p(d_l-1)$, and using properties of determinant and the following relations between the partials of $f$ and the partials of $\tilde F$:
\begin{equation*}\label{eq:part.hom}
\left\{ \begin{array}{ll} 
  \frac{\partial\tilde F_j}{\partial y_i}/ y_{0}^{d_j-1}=\frac{\partial f_j}{\partial x_i}, & \,\, 1\leq i \leq n-1,\,\,1\leq j\leq p,\\[1.5mm] \frac{\partial\tilde F_j}{\partial t_l}= - y_0^{d_j}\delta_{l,j}, & \,\,\, 1\leq j, l \leq p,\\[1.5mm]
\frac{\partial\tilde F_j}{\partial y_0}/y_0^{d_j-1}= - (x_1\frac{\partial f_j}{\partial x_1}+\ldots+x_n\frac{\partial f_j}{\partial x_n}), & \,\,\, 1\leq j \leq p,
 \end{array} \right.
\end{equation*}
we obtain
\begin{equation}\label{eq:malg.1}\sup_{J,j}\|1/x_n\|\|M'_J(j,f)(x)\|\leq C\sup_I\|M'_I(f)(x)\|,
\end{equation}
where the $M'$ matrices are defined as follows. If $I=(i_1<\ldots<i_p)$ with $i_1\neq 1$, i.e., if the matrix indexed by $I$ does not contain the column  $y_0\partial \tilde F/\partial y_0$ then $M'_I(f)=M_I(f)$, and otherwise, one replaces the column vector with the $\partial f/\partial x_n$ terms by $\sum\limits_{l=1}^n \frac{x_l}{x_n}\partial f/\partial x_l$. A similar substitution should be made to define the $M'_J(j,f)$ terms.

Using the fact that $\|x_n\|\geq \|x_j\|,$ for $j=1,\ldots,n$, the inequality (\ref{eq:malg.1}) is equivalent to generalized Malgrange-Gaffney condition (\ref{eq:malg.}).
\end{proof}

\begin{remark}\label{r:jelonek}
 Jelonek \cite{Je, Je-man} proved directly that the  Malgrange-Gaffney condition (actually in a more general form) is equivalent to the $KOS$-regularity, by comparing the corresponding functions to the Kuo distance function \cite{Kuo}.
\end{remark}



\section{Atypical values and fibrations at infinity}\label{s:fib}

\begin{definition}\label{d:typical}
Let $X\subset \bR^n$ be a  submanifold and let $f:X\to\bR^p$ be a $C^1$-mapping, where $p< \dim X$.
One says that $t_0\in\bR^p$ is a \textit{typical value} of $f$ if there exists a disk $D\subset \bR^p$ centered at $t_0$ such that the restriction $f_| : f^{-1}(D) \to D$ is a topologically trivial fibration; otherwise one says that $t_0$ is \textit{atypical}. We denote by $B(f)$ the \textit{set of atypical values} of $f$ (or the \textit{bifurcation locus} of $f$).
\end{definition}
Note that by definition $B(f) \supset \overline{\im f}\m \mathring{\im f}$ and that 
 the fibres of $f$ depend on the  connected components of $\bR^p \m B(f)$.

\subsection{\texorpdfstring{$\rho$}{rho}-regularity}\label{ss:rho-reg}

Let $K\subset\bR^n$ be some compact (eventually empty) set and let
$\rho:\bR^n\setminus K\to \bR_{\geq 0}$ be a proper analytic submersion.
Let $f:X\to\bR^p$ be a $C^1$-mapping, where $X\subset \bR^n$ is a submanifold.

\begin{definition}[\textbf{$\rho$-regularity at infinity}] \label{d:rho-reg}
We say that $f$ is \textit{$\rho$-regular at $p_0\in\bX^{\infty}$} if there is an open neighborhood $U\subset\bP^n\times\bR^p$ of $p_0$ and an open neighborhood $D\subset\bR^p$ of $\tau(p_0)$ such that, for all $t\in D$, the fibre $f^{-1}(t)\cap U$ intersects all the levels of the restriction $\rho_{|U\cap\bR^n}$ and this intersection is transversal.

We call \emph{Milnor set} the critical locus $\Sing(f, \rho)$ and we denote it by $\cM(f)$.

We say that \textit{the fibre $f^{-1}(t_0)$ is $\rho$-regular at infinity} if $f$ is $\rho$-regular at all
points $p_0\in \bX^{\infty}\cap\tau^{-1}(t_0)$. 
We call:
\[
 S(f):=\{t_0\in\bR^p\mid\exists \{\mathrm{x}_j\}_{j\in \bN}\subset \cM(f), \lim_{j\to\infty}\|\mathrm{x}_j\|=\infty\mbox{ and }\lim_{j\to\infty}f(\mathrm{x}_j)=t_0\}.
\]
the \emph{set of asymptotic $\rho$-nonregular values}. We denote $A_{\rho} := f(\Sing f) \cup S(f)$ and call it the \textit{$\rho$-bifurcation set}.
\end{definition}

\begin{remark}\label{n:rho}
 The definition of $\rho$-regularity  at infinity of a fibre
$f^{-1}(t_0)$ {\em does not depend on any
proper extension of $f$}, since it is equivalent to the following: for any
sequence
$\{ \mathrm{x}_k\}_{k\in \bN} \subset \bR^n$, $\|\mathrm{x}_k\| \to \infty$, $f(\mathrm{x}_k) \to t_0$, there
exists some $k_0= k_0(\{\mathrm{x}_k\}_{k\in \bN})$ such that, if $k\ge k_0$ then $f$ is
transversal to
$\rho$ at
$\mathrm{x}_k$. It also follows from the definition that if $f^{-1}(t_0)$ is $\rho$-regular at
infinity then this fibre has at most isolated singularities.

The transversality of the fibres of $f$ to the levels of $\rho$
is a ``Milnor type'' condition. In case $\rho$ is the Euclidean norm, denoted in this
paper by  $\rho_E$, this condition has been used by John Milnor in the local study of
singular functions 
\cite[\S 4,5]{Mi}.  For complex polynomial functions, transversality to big spheres (i.e.
$\rho_E$-regularity, in our definition) was used in
\cite[pag. 229]{Br} and later in  \cite {NZ}, 
where it is called {\em M-tameness}. The name ``Milnor set'' occurs in  \cite {NZ} too. Distance functions like $\rho$ are also central ingredients in defining regular stratifications, e.g. Mather \cite{Ma}, Kuo \cite{Kuo}, Bekka \cite{Be}.
\end{remark}

\begin{example}
Let $\rho : \bR^n \to \bR_{\ge 0}$, $\rho(x) = (\sum_{i=1}^n | x_i |^{2p_i})^{1/2p}$, where
$(w_1, \ldots , w_n) \in \bN^n$, $p = \lcm \{ 
 w_1, \ldots , w_n\}$ and $w_i p_i = p$, $\forall i$. This function is ``adapted'' to
polynomials which are quasihomogeneous of type $(w_1, \ldots , w_n)$. By using it,
one can show that a value $c\in \bR$ is atypical for such a polynomial if and only if
$c$ is a critical value of $f$ (hence only the value $0$ can be atypical). Namely, let
$E_r :=
\{ x\in \bR^n \mid \rho  (x) < r \}$ for some $r>0$. Then the local Milnor fibre of $f$ at
$0\in \bR^n$ (i.e.
$f^{-1}(c) \cap E_\varepsilon$, for some small enough $\varepsilon$ and $0< | c| \ll
\varepsilon$) is diffeomorphic to the global fibre $f^{-1}(c)$, since $f^{-1}(c)$ is
transversal to
$\partial
\overline{E_r}$, $\forall r\ge \varepsilon$. 
 \end{example}

\subsection{\texorpdfstring{$t$}{t}-regularity implies \texorpdfstring{$\rho_E$}{rho\_E}-regularity}\label{ss:t-reg_rho-reg}

\begin{proposition}\label{p:t-reg_rho-reg}
Let $X\subset \bR^n$ be semi-algebraic and let $f:X\to\bR^p$ be a nontrivial semi-algebraic $C^1$ mapping, where $n>p$. If $f$ is $t$-regular at $p_0\in \bX^\ity$ then $f$ is $\rho_E$-regular at $p_0$.
\end{proposition}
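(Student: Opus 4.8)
The plan is to argue by contrapositive: assume that $f$ is not $\rho_E$-regular at $p_0$ and produce a characteristic covector at infinity lying in $C_\tau(\bP^n\times\bR^p)\cap\cC^\ity_{p_0}$, contradicting $t$-regularity. Failure of $\rho_E$-regularity at $p_0$ means there is a sequence $\{\mathrm{x}_k\}_{k\in\bN}\subset\bR^n$ with $(\mathrm{x}_k,f(\mathrm{x}_k))\to p_0$ (hence $\|\mathrm{x}_k\|\to\infty$) such that at $\mathrm{x}_k$ the fibre $f^{-1}(f(\mathrm{x}_k))$ is \emph{not} transversal to the level of $\rho_E$ through $\mathrm{x}_k$; equivalently $\mathrm{x}_k\in\cM(f)$, i.e. the radial vector $\mathrm{x}_k$ lies in the span of $\nabla f_1(\mathrm{x}_k),\ldots,\nabla f_p(\mathrm{x}_k)$ modulo the usual rank condition. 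So there are scalars $\psi_{ik}$, which we normalise by $\|(\psi_{1k},\ldots,\psi_{pk})\|=1$ and pass to a convergent subsequence $\psi_{ik}\to\psi_i$, with $\sum_{i=1}^p\psi_{ik}\nabla f_i(\mathrm{x}_k)$ proportional to $\mathrm{x}_k$ (or, in the degenerate case where $Df(\mathrm{x}_k)$ drops rank, with $\sum_i\psi_{ik}\nabla f_i(\mathrm{x}_k)=0$, which is even more favourable).

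Next I would transport this to the chart $U_n\times\bR^p$ exactly as in \S\ref{ss:t-reg-interpret}, assuming after a linear change of coordinates that $p_0\in\bX^\infty\cap(U_n\times\bR^p)$ and $|x_n|\ge|x_i|$ near $p_0$. Set $y_0=1/x_n$, $y_i=x_i/x_n$, and use the relations (\ref{eq:partials}) relating $\partial F_i/\partial y_j$ to $x_n\partial f_i/\partial x_j$ and $\partial F_i/\partial y_0$ to $-x_n(x_1\partial f_i/\partial x_1+\cdots+x_n\partial f_i/\partial x_n)$. The proportionality $\sum_i\psi_{ik}\nabla f_i(\mathrm{x}_k)=\alpha_k\mathrm{x}_k$ translates, after multiplying by $x_{nk}$, into $\sum_i\psi_{ik}\tfrac{\partial F_i}{\partial y_j}(\mathrm{y}_k,\mathrm{t}_k)=\alpha_k x_{jk}x_{nk}$ for $1\le j\le n-1$ and, combining the $j=1,\ldots,n$ identities radially as in the proof of Theorem~\ref{t:main}, a companion expression for $\sum_i\psi_{ik}\,y_{0k}\tfrac{\partial F_i}{\partial y_0}$. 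The key point is to form the covector
\[
\vec{n}_{H_k}:=\Bigl(\lambda_k,\ \textstyle\sum_i\psi_{ik}\tfrac{\partial F_i}{\partial y_1}(\mathrm{y}_k,\mathrm{t}_k),\ \ldots,\ \sum_i\psi_{ik}\tfrac{\partial F_i}{\partial y_{n-1}}(\mathrm{y}_k,\mathrm{t}_k),\ -\psi_{1k},\ \ldots,\ -\psi_{pk}\Bigr),
\]
with $\lambda_k$ chosen so that $\vec{n}_{H_k}$ is the genuine conormal direction to the level of $g_{|\bX}$ at $(\mathrm{y}_k,\mathrm{t}_k)$; this covector defines a hyperplane $H_k$ tangent to the fibres of $g_{|\bX}$, so any limit $H$ of $H_k$ lies in $\cC^\ity_{p_0}$.

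It then remains to show that $H$ also contains the ``horizontal'' space $\ker d\tau$, i.e. that $H\in C_\tau(\bP^n\times\bR^p)$; concretely this amounts to showing that the limit direction of $\vec{n}_{H_k}$ has the form $[0:\cdots:0:\psi_1:\cdots:\psi_p]$ with $(\psi_1,\ldots,\psi_p)\ne(0,\ldots,0)$. Since $\|(\psi_{1k},\ldots,\psi_{pk})\|=1$ the last $p$ coordinates converge to a unit vector, so the only real work is to prove that, after normalising, the first $n$ coordinates of $\vec{n}_{H_k}$ tend to $0$: the $y_1,\ldots,y_{n-1}$ components are $\alpha_k x_{jk}x_{nk}$ and the $y_0$ component must be controlled similarly. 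I expect this to be the main obstacle, and I would handle it exactly as in the (b)$\Rightarrow$(a) part of the proof of Theorem~\ref{t:main}: bound the $y_0$-term against the others using a curve-selection-lemma argument on a suitable semi-algebraic set, exploiting $\ord_s y_0(s)\ge 1$ along an analytic curve through $p_0$ and the Cauchy--Schwarz inequality, to conclude $\|\sum_i\psi_{ik}y_{0k}\tfrac{\partial F_i}{\partial y_0}\|\ll 1$ while the other components vanish because $\|\mathrm{x}_k\|\to\infty$ forces $\alpha_k\to0$ in the radial case (and everything is $0$ in the rank-drop case). This yields $\vec{n}_{H_k}\to[0:\cdots:0:\psi_1:\cdots:\psi_p]$, hence $H\in C_\tau(\bP^n\times\bR^p)\cap\cC^\ity_{p_0}$, contradicting $t$-regularity and completing the proof.
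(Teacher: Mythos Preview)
Your approach has a genuine gap, and you have misidentified where it lies. You correctly extract from failure of $\rho_E$-regularity a sequence $\mathrm{x}_k\in\cM(f)$ with $\sum_i\psi_{ik}\nabla f_i(\mathrm{x}_k)=\alpha_k\mathrm{x}_k$, $\|\psi_k\|=1$. In the chart the $y_0$-component of your covector is \emph{not} the obstacle: it is killed for free by the choice of $\lambda_k$, exactly as in the (b)$\Rightarrow$(a) part of Theorem~\ref{t:main}. The real problem is the $y_j$-components for $1\le j\le n-1$, which equal $\alpha_k\,x_{jk}x_{nk}$. You claim that ``$\|\mathrm{x}_k\|\to\infty$ forces $\alpha_k\to 0$'', but the Milnor-set condition constrains only the \emph{direction} of $\sum_i\psi_{ik}\nabla f_i$, not its magnitude. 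What you would actually need is $|\alpha_k|\cdot\|\mathrm{x}_k\|^2\to 0$, i.e.\ $\|\mathrm{x}_k\|\cdot\|\sum_i\psi_{ik}\nabla f_i(\mathrm{x}_k)\|\to 0$ along your sequence --- and that is precisely the $KOS$ condition, strictly stronger than membership in $\cM(f)$. So your particular sequence of $C_g$-covectors need not converge into $C_\tau$, and the curve-selection argument you invoke from Theorem~\ref{t:main} does not address this term.

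The paper's proof avoids the difficulty by a short geometric observation rather than any estimate on $\alpha_k$. One introduces $d^\infty(y,t)=y_0^2/(y_1^2+\cdots+y_{n-1}^2+1)$ in the chart $U_n\times\bR^p$ and notes that $d^\infty=1/\rho_E^2$, so the levels of $d^\infty$ are exactly the Euclidean spheres. Since $d^\infty$ and $g^2=y_0^2$ differ by a nonvanishing analytic factor at $p_0$, Lemma~\ref{l:unit} gives $\cC^\infty_{p_0}=C_{d^\infty,p_0}(\bX\setminus\bX^\infty)$. Thus $t$-regularity, namely $C_\tau\cap\cC^\infty_{p_0}=\emptyset$, is literally the statement that near $p_0$ the fibres of $\tau$ are transverse to the levels of $d^\infty$, which is $\rho_E$-regularity. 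The conceptual point you are missing is that the Milnor-set condition immediately produces a covector lying in $C_\tau$ inside the relative conormal of $d^\infty$, not of $g$; these two conormals differ at finite points and agree only over $p_0$, and Lemma~\ref{l:unit} is what lets you pass from one to the other without ever controlling $\alpha_k$.
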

\begin{proof}
We may assume without loss of generality that $p_0 = ([0:0:\ldots:1], 0, \ldots 0)$.
Let $d^{\infty}:  \bX\cap U_n\times\bR^p \to\bR_{\ge 0}$, 
        $(y,t)\mapsto \frac{y_0^2}{y_1^2+\ldots+y_{n-1}^2+1}$ and note that 
$d^{\infty}(y,t)=\frac{1}{\rho_E^2(\mathrm{x})}.$
As usual, we denote by $g$ the projection to the variable $y_0$. At $p_0$, the functions $g^2$ and $d^{\infty}$ differ by a unit, they have the same zero locus $\bX^{\infty}$ and the same levels. Therefore  
$\cC^{\infty}_{p_0} = C_{g,p_0}(\bX\backslash \bX^{\infty} \cap (U_n\times\bR^p)) = C_{g^2,p_0}(\bX\backslash \bX^{\infty} \cap (U_n\times\bR^p))= C_{d^{\infty},p_0}(\bX\backslash \bX^{\infty} \cap (U_i\times\bR^p))$, where the last equality follows by Lemma \ref{l:unit}.  

The $t$-regularity at $p$ (Definition \ref{d:t-reg}) is therefore equivalent to:
\begin{eqnarray}
C_{\tau}(\bP^n\times\bR^p)\cap C_{d^{\infty},p_0}(\bX\backslash \bX^{\infty} \cap U_i\times\bR^p) =\emptyset
\end{eqnarray}
which implies that, in some neighbourhood of $p_0$ intersected with $\bR^n$, the fibres $\{\tau = \mbox{const.}\}$ are
transverse to the levels of the function $d^{\infty}$, which coincide with
the levels of the function $\rho_{E}$.
\end{proof}
\begin{remark}\label{r:r-reg_noimply_t-reg}
The converse of Proposition \ref{p:t-reg_rho-reg} is not true, see Example \ref{ex:paunescuzaharia}.
 
\end{remark}

We have the following general structure result: 
\begin{theorem}\label{t:asympSard}  
 Let $f:X\to\bR^p$ be a $C^2$-mapping on a submanifold $X\subset \bR^n$, where $\dim X >p> 0$. Then:
\begin{enumerate}
\rm \item \textbf{(Fibration theorem)} \it \\
 $S(f)$ is a closed set. 
If $X$ is moreover closed, then $A_{\rho_E} := f(\Sing f)\cup S(f)$ is a closed set and the restriction:
\[ f_| : X \m f^{-1}(A_{\rho_E}) \to \bR^p \m A_{\rho_E}
\]
is a locally trivial fibration over each connected component of $\bR^p \m (A_{\rho_E})$.

 In particular $B(f) \subset A_{\rho_E}$.

\smallskip

\rm \item \textbf{(Semi-algebraic Morse-Sard theorem for asymptotic non-regular values)} \it \\
Assume that $X$ is semi-algebraic and that $f$ is a semi-algebraic mapping.\\
Then $S(f)$ and $A_{\rho_E}$ are semi-algebraic sets of dimension $\le p-1$.

\end{enumerate}

\end{theorem}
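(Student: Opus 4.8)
The plan is to treat the three assertions separately, building each on the bridge between $t$-regularity and $\rho_E$-regularity established in Proposition \ref{p:t-reg_rho-reg}, together with the semi-algebraic structure theory and a fibration theorem at infinity. First I would prove closedness of $S(f)$: a point $t_0$ lies in $S(f)$ iff there is a sequence $\{\mathrm{x}_j\}$ in the Milnor set $\cM(f) = \Sing(f,\rho_E)$ escaping to infinity with $f(\mathrm{x}_j)\to t_0$. Taking a diagonal sequence over a sequence $t_0^{(m)}\to t_0$ with $t_0^{(m)}\in S(f)$ produces points in $\cM(f)$ of norm $\ge m$ with image within $1/m$ of $t_0$, so $t_0\in S(f)$; hence $S(f)$ is closed. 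Since $f$ is $C^2$ on a closed submanifold $X$, the critical value set $f(\Sing f)$ is closed (properness of $f_|$ restricted to a neighbourhood of each critical point, or the semi-algebraic argument in part (b) in that case), and therefore $A_{\rho_E}$ is closed.

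For the fibration statement in (a), the key is the following dichotomy: over $\bR^p\m A_{\rho_E}$, for each $t_0$ the fibre $f^{-1}(t_0)$ has no critical points (since $t_0\notin f(\Sing f)$) and is $\rho_E$-regular at infinity (since $t_0\notin S(f)$, there is no sequence in $\cM(f)$ escaping with image $t_0$, which by Remark \ref{n:rho} is exactly $\rho_E$-regularity of the fibre at infinity). I would invoke Proposition \ref{p:rho-reg_top-triv} — the fibration theorem ``at infinity'' quoted from the introduction — to get, for each $t_0$ and a suitable small disc $D\ni t_0$, a trivialization of $f_|$ over $D$ outside a large ball $\overline{B_R}$. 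Inside $\overline{B_R}$, the map $f_|: f^{-1}(D)\cap \overline{B_R}\to D$ is proper and, after shrinking $D$, a submersion (no critical points over $D$), hence a trivial fibration by Ehresmann. Gluing the ``core'' trivialization with the ``at infinity'' one along a collar, using a controlled vector field that is tangent to the spheres where the two constructions overlap, yields local triviality of $f_|$ over $D$; doing this over each connected component of $\bR^p\m A_{\rho_E}$ gives the global statement, and $B(f)\subset A_{\rho_E}$ is then immediate from the definition of typical value. The gluing of the two vector fields across the transition region is the main technical obstacle — one must arrange the interior Ehresmann field and the $\rho_E$-controlled field at infinity to agree (or be convexly interpolated) on an annular overlap without creating new zeros, which is where the $C^2$ hypothesis and the transversality to $\rho_E$-levels are used.

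For part (b), assume $X$ and $f$ semi-algebraic. The Milnor set $\cM(f)=\Sing(f,\rho_E)$ is semi-algebraic, being cut out by the vanishing of the appropriate maximal minors of the Jacobian of $(f,\rho_E^2)$ on $X$. Consider the semi-algebraic set $\cM(f)$ together with the semi-algebraic map $(\mathrm{x})\mapsto (f(\mathrm{x}), \|\mathrm{x}\|)\in \bR^p\times \bR_{\ge 0}$; compactifying the last coordinate via $u = 1/(1+\|\mathrm{x}\|)$ and taking the closure of the graph, the ``boundary at infinity'' fibre $\{u=0\}$ projects semi-algebraically onto $S(f)$ — so $S(f)$ is semi-algebraic (this is essentially the curve-selection/closure argument, and dimension is preserved or drops under the projection). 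To bound $\dim S(f)\le p-1$, I would argue by contradiction: if $\dim S(f)=p$, then $S(f)$ has nonempty interior, so there is an open set $\Omega\subset \bR^p$ every point of which is an asymptotic $\rho_E$-nonregular value; but by Proposition \ref{p:t-reg_rho-reg} this means no point of $\Omega$ is $t$-regular, hence (Theorem \ref{t:main}) no point of $\Omega$ is $KOS$-regular, so $\Omega\subset K_\ity(f)$, contradicting the Kurdyka–Orro–Simon bound $\dim K_\ity(f)\le p-1$. (Alternatively, and to make the proof self-contained as claimed in the introduction, one can bound $\dim S(f)$ directly by a stratified Sard argument along a partial Thom stratification at infinity, cf. Definition \ref{d:partialThom}: $S(f)$ is contained in the image under $\tau$ of the strata of $\bX^\ity$ on which $f$ fails $\rho_E$-regularity, and a dimension count on those strata — each of dimension $\le n+p-1$ fibred over $\bX^\ity$ — forces $\dim S(f)\le p-1$ by Hardt's semi-algebraic triviality theorem.) Finally $f(\Sing f)$ has dimension $\le p-1$ by the semi-algebraic Morse–Sard theorem, so $A_{\rho_E}=f(\Sing f)\cup S(f)$ is semi-algebraic of dimension $\le p-1$. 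The delicate point in (b) is establishing that the ``limit at infinity'' operation on the semi-algebraic family $\cM(f)$ does not inflate dimension — this is handled by Hardt's theorem, which gives a semi-algebraic trivialization of the family $\cM(f)\to S(f)$ near infinity and hence $\dim S(f)\le \dim \cM(f) - 1$ combined with the genericity of regular values, but making the bound land exactly at $p-1$ requires the finer stratified argument rather than this crude inequality.
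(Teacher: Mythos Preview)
Your overall architecture for part (a) matches the paper's: diagonal extraction for closedness of $S(f)$, Proposition~\ref{p:rho-reg_top-triv} for triviality outside a large ball, Ehresmann inside, then a gluing over an annulus. One genuine slip: you assert that $f(\Sing f)$ is closed when $X$ is closed. This is false in general for $C^2$ maps --- critical values can accumulate at a non-critical value via critical points running off to infinity. The paper does \emph{not} claim $f(\Sing f)$ is closed; it shows directly that $\overline{f(\Sing f)}\subset f(\Sing f)\cup S(f)$ by a dichotomy: given $\{\mathrm x_j\}\subset\Sing f$ with $f(\mathrm x_j)\to t_0$, either some subsequence is bounded (then $X$ closed and $\Sing f$ closed give $t_0\in f(\Sing f)$), or every subsequence is unbounded (then $\Sing f\subset\cM(f)$ gives $t_0\in S(f)$). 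That is what makes $A_{\rho_E}$ closed.

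For part (b), your primary route --- $S(f)\subset K_\ity(f)$ via Proposition~\ref{p:t-reg_rho-reg} and Theorem~\ref{t:main}, then invoke the Kurdyka--Orro--Simon bound $\dim K_\ity(f)\le p-1$ --- is logically valid but is exactly what the paper sets out to avoid: the stated aim is to \emph{supersede} the KOS bound by a self-contained argument (and Theorem~\ref{t:main} is only stated for $X=\bR^n$, so your reduction does not even cover the full generality of the theorem). The paper's argument differs from both of your sketches. It takes a partial Thom stratification $\cS$ of $\bX$ (Proposition~\ref{p:partialThom}), forms the \emph{stratified critical locus at infinity} $\Sing^\ity_\cS\tau_{|\bX}:=\bigcup_{\cS_i\subset\bX^\ity}\Sing(\tau_{|\cS_i})$, and observes that (i) $\tau(\Sing^\ity_\cS\tau_{|\bX})$ has dimension $\le p-1$ by the semi-algebraic Sard theorem applied stratum by stratum, and (ii) if $p_0\notin\Sing^\ity_\cS\tau_{|\bX}$ then, by the very definition of $t$-regularity together with the Thom $(a_g)$-condition built into $\cS$, $f$ is $t$-regular at $p_0$, hence $\rho_E$-regular there by Proposition~\ref{p:t-reg_rho-reg}. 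This yields $S(f)\subset\tau(\Sing^\ity_\cS\tau_{|\bX})$. Your parenthetical alternative gestures toward this but misidentifies the mechanism: it is not Hardt triviality or a dimension count on $\cM(f)$, but Sard applied to the restriction of $\tau$ to the boundary strata, with the Thom condition supplying the passage from ``$\tau$ is a submersion on the stratum through $p_0$'' to ``$t$-regular at $p_0$''.
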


 \bigskip

 
By chaining Theorem \ref{t:main} to Proposition \ref{p:t-reg_rho-reg}, we get:
\begin{corollary}\label{c:kos_vs_sf}
Let $f : \bR^n \to \bR^p$ be a nontrivial semi-algebraic $C^2$ mapping. If $f$ is $KOS$-regular at $p_0\in \bX^\ity$ then $f$ is $\rho_E$-regular at $p_0$.
In particular, one has the inclusions:
\begin{equation} \label{eq:strict}
 S(f)\subset K_\ity(f) \mbox{ \ \ and \ \ }  A_{\rho_E} \subset A_{KOS}. \end{equation}\fin
\end{corollary}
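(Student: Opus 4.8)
The plan is to derive Corollary \ref{c:kos_vs_sf} purely by chaining together the two results already established. First I would recall that Theorem \ref{t:main} asserts the equivalence, at a fixed point $p_0\in\bX^\ity$, of $t$-regularity and $KOS$-regularity (and also the Malgrange-Gaffney condition), while Proposition \ref{p:t-reg_rho-reg} asserts that $t$-regularity at $p_0$ implies $\rho_E$-regularity at $p_0$. Concatenating these, if $f$ is $KOS$-regular at $p_0$ then $f$ is $t$-regular at $p_0$ by Theorem \ref{t:main}, hence $\rho_E$-regular at $p_0$ by Proposition \ref{p:t-reg_rho-reg}. This proves the first assertion; note that since we only need one implication of Theorem \ref{t:main} we do not even invoke the full strength of the equivalence, and the hypotheses match (semi-algebraic $C^1$, indeed $C^2$, and $n>p>0$).

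Next I would translate the pointwise-at-infinity statement into the inclusions of value sets. For the inclusion $S(f)\subset K_\ity(f)$: let $t_0\in S(f)$. By Definition \ref{d:rho-reg}, there is a sequence $\{\mathrm{x}_j\}\subset\cM(f)$ with $\|\mathrm{x}_j\|\to\infty$ and $f(\mathrm{x}_j)\to t_0$. Passing to a subsequence, $(\mathrm{x}_j,f(\mathrm{x}_j))$ converges to some $p_0\in\bX^\ity$ with $\tau(p_0)=t_0$, and this $p_0$ is by construction not $\rho_E$-regular (the Milnor set accumulates at it). By the contrapositive of the implication just proved, $p_0$ is not $KOS$-regular, i.e. $p_0$ is an asymptotical critical point of $f$ in the sense of Definition \ref{d:kos-reg}, so $\tau(p_0)=t_0\in K_\ity(f)$. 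Then for the second inclusion $A_{\rho_E}\subset A_{KOS}$: recall $A_{\rho_E}=f(\Sing f)\cup S(f)$ and $A_{KOS}=f(\Sing f)\cup K_\ity(f)$; since the critical-value parts coincide and $S(f)\subset K_\ity(f)$, the inclusion follows immediately.

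I would also want to remark briefly on the one subtlety: the notion of $KOS$-regular point (as opposed to $KOS$-regular value) depends on the chosen embedding $\graph f\subset\bX\subset\bP^n\times\bR^p$, as noted after Definition \ref{d:kos-reg}; but the standard embedding is the one used throughout \S\ref{s:t-reg} and in Proposition \ref{p:t-reg_rho-reg}, so there is no inconsistency, and in any case the value set $K_\ity(f)$ is embedding-independent. Similarly $\cM(f)$ and $S(f)$ are intrinsic. So the passage from points at infinity to values is legitimate.

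I do not anticipate a genuine obstacle here — the statement is explicitly flagged in the text as obtained "by chaining Theorem \ref{t:main} to Proposition \ref{p:t-reg_rho-reg}", so the only care needed is the bookkeeping in the curve-selection / subsequence argument that turns a sequence of Milnor-set points escaping to infinity into a genuine non-$\rho_E$-regular point $p_0\in\bX^\ity$, and the verification that "not $\rho_E$-regular at $p_0$" is exactly the negation of the conclusion of Proposition \ref{p:t-reg_rho-reg}. Strictness of the inclusions is not claimed in the corollary itself; it is deferred to Example \ref{ex:paunescuzaharia}, so I would not attempt to prove it here.
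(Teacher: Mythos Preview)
Your proposal is correct and follows exactly the route the paper indicates: the paper's entire proof is the sentence ``By chaining Theorem \ref{t:main} to Proposition \ref{p:t-reg_rho-reg}, we get'', and you have simply unpacked that chaining and supplied the routine bookkeeping (subsequence extraction in $\bP^n\times\bR^p$, contrapositive, matching of the two definitions of $A$). There is nothing to add.
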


The above results prove that Theorem  \ref{t:asympSard} extends the  main result of \cite{KOS}, and this extension turns out to be strict, as showed by the next example. 

\begin{example}\label{ex:paunescuzaharia}\cite{LZ}
 The polynomials $f_{n,q}:\mathbb{C}^{3}\rightarrow\mathbb{C}$,
$f_{n,q}(x,y,z):=x-3x^{2n+1}y^{2q}+2x^{3n+1}y^{3q}+yz$, 
where $n,q\in\mathbb{N}\setminus\{0\}$,
 are $\rho_E$-regular at infinity, more precisely
$S(f_{n.q})=\emptyset$. Then it is shown in \cite{LZ} that $f_{n,q}$ satisfies Malgrange's
condition (hence it is $t$-regular at infinity) for any $t\in\mathbb{C}$ if and only if $n\leq q$.
For $n>q$ we therefore get $\emptyset= S(f_{n.q})\subsetneq K_{\infty}(f_{n.q})\neq \emptyset$. In particular, for $n>q$ the polynomial is $\rho$-regular but not $t$-regular.
\end{example}

\begin{example}\label{ex:fair}
Let $f:\bR^3\to\bR^2$, $f(x,y,z)=(x^2,xy)$.
 We have $\bX^\infty=A\cup B$, where $A=\{([0:0:c:d],(0,t_2))\in\bP^3\times\bR^2\}$ and $B=\{([0:0:0:1],(t_1,t_2))\in\bP^3\times\bR^2\mid t_1>0\}$, and that $\cZ^\infty := \cZ \cap (H^\ity \times \bR^2) = \{([0:0:y:z],(t_1,t_2))\in\bP^3\times\bR^2\}$. Thus $\bX^\infty \subsetneq \cZ^\ity$ and by straightforward computations we get that $f$ is not ``fair'' in the sense of Gaffney at any point $p_0\in\bX^\infty$. Therefore one cannot use Gaffney's approach for $f$. Nevertheless, we still have the equivalence of $t$-regularity with the Malgrange-Gaffney condition (Theorem \ref{t:main}) and  the fibration theorem (Theorem \ref{t:asympSard}), see also Figure \ref{f:spec}.

By straightforward computations one gets $f(\Sing f) = \{ (0,0) \}$ and $B(f) =K(f) = S(f) = \tau(A)=\{(0,t_2);t_2\in\bR\}$. 
\end{example}

\section{Proof of Theorem \ref{t:asympSard}}


\subsection{ Proof of (b).} 
The image $f(\Sing f)$  by $f$ of the semi-algebraic set $\Sing f$ is semi-algebraic,  by the Tarski-Seidenberg theorem, and of dimension $\le p-1$ by the semi-algebraic Sard theorem. 

To show that $S(f)$ is semi-algebraic, we use the semi-algebraic embedding
$\varphi : \bR^n \to \bR^{n+1}\times \bR^p$, 
$(x_{1},\ldots,x_{n}) \mapsto \left(\frac{1}{\sqrt{1+\| \mathrm{x}\|  ^{2}}}, \frac{x_{1}}{\sqrt{1+\| \mathrm{x}\|^{2}}}, \ldots, \frac{x_{n}}{\sqrt{1+\| \mathrm{x}\|^{2}}}, f(\mathrm{x})\right)$.
Let $V_{1}:=\overline{\varphi(\cM(f))}\cap\{(z_0,z_{1},\ldots,z_n,t)\in\mathbb{R}^{n+1}
\times\mathbb{R}^p\mid z_{0}=0\}$ and let $\pi:\mathbb{R}^{n+1}\times\mathbb{R}^p\rightarrow\mathbb{R}^p$ be the canonical projection. Then $V_1$ is semi-algebraic and $S(f)=\pi(V_{1})$, so we may conclude by the Tarski-Seidenberg theorem.

To prove the dimension assertion for $S(f)$ we need some preliminaries.

\subsection*{Partial Thom stratification at infinity}

We show that $\bX^\ity$ may be endowed with a stratification having good enough properties such that one may use it to define the stratified singular locus 
of $\tau_{| \bX^\ity}$. By ``stratifications'' we  mean, as usual, locally finite stratifications satisfying the frontier condition.  For some strata $A, B$, we write $B \prec A$ to say that $B\subset \overline{A} \m A$.

\begin{proposition}\label{p:partialThom}
 Let $f : X \to \bR^p$ be a semi-algebraic $C^1$-mapping on a semi-algebraic subset $X\subset \bR^n$. There exists a semi-algebraic Whitney (a)-regular stratification $\cS$ of $\bX$ such that $\bX^\ity$ is a union of strata, and that any pair of strata $B\prec A$, with $A \subset \bX\m \bX^\ity$ and $B \subset \bX^\ity$,
satisfy the Thom (a$_g$)-regularity condition with respect to some function $g$ defining locally $\bX^\ity$ in $\bX$. 
\end{proposition}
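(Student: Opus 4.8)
The plan is to construct the stratification of $\bX$ in two stages, first arranging the Thom $(a_g)$-condition along $\bX^\ity$ and only afterwards refining to get Whitney $(a)$-regularity on all of $\bX$, then checking that the second refinement does not destroy the first property for the pairs we care about. I would work locally in each affine chart $U_j \times \bR^p$, $j \ge 1$, where $\bX^\ity$ is cut out by the function $g = g_j$ (the $x_0$-coordinate), invoking Lemma \ref{l:unit} to see that the construction is chart-independent up to multiplication by a unit. Since $f$ is semi-algebraic and $C^1$, the closure $\bX = \overline{\graph f}$ is a semi-algebraic set and $\bX^\ity = \bX \cap (H^\ity \times \bR^p)$ is a semi-algebraic closed subset.

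First I would stratify the pair $(\bX, \bX^\ity)$: take any semi-algebraic stratification $\cS_0$ of $\bX$ for which $\bX^\ity$ is a union of strata (this exists since $\bX^\ity$ is closed semi-algebraic). Then apply the existence of Thom $(a_g)$-regular semi-algebraic stratifications for a semi-algebraic function $g$ defining the zero locus $\bX^\ity$ locally: by the theorem of Hironaka (in the subanalytic setting) and its semi-algebraic version, there is a semi-algebraic refinement $\cS_1$ of $\cS_0$, compatible with $\bX^\ity$, such that every pair $B \prec A$ with $B \subset \{g=0\} = \bX^\ity$ and $A \subset \bX \m \bX^\ity$ satisfies the Thom $(a_g)$-condition. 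Here one uses that the charts glue: on overlaps the defining functions differ by a semi-algebraic unit, and the $(a_g)$-condition is invariant under multiplying $g$ by a nonvanishing function, so the locally defined stratifications patch to a global one.

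Second, I would refine $\cS_1$ further to a semi-algebraic Whitney $(a)$-regular stratification $\cS$, again keeping $\bX^\ity$ a union of strata — this is the standard existence theorem for Whitney stratifications in the semi-algebraic category (Whitney $(a)$ is weaker than the usual $(b)$, so it is certainly available). The one thing that needs an argument is that this last refinement, which subdivides strata lying inside $\bX^\ity$ and strata lying in $\bX \m \bX^\ity$, does not wreck the Thom $(a_g)$-property for the pairs $B \prec A$ with $A \subset \bX\m\bX^\ity$, $B \subset \bX^\ity$. But the Thom $(a_g)$-condition at a point $y \in B$ concerns limits of the planes $\ker(dg)|_{T_x A}$ as $x \to y$ within $A$; restricting $A$ to a smaller open subset and restricting $B$ to a smaller stratum only selects sub-sequences and sub-limits, so the condition is inherited by refinement. (If one is cautious, one can instead run the two constructions in the opposite order, or invoke a simultaneous existence theorem for stratifications that are both Whitney $(a)$-regular and Thom $(a_g)$-regular relative to a prescribed closed set; the semi-algebraic Thom–Mather machinery delivers this directly.)

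The main obstacle, such as it is, is purely bookkeeping: making sure the locally-defined functions $g_j$ on the various charts $U_j \times \bR^p$ are compatible so that "the Thom $(a_g)$-condition with respect to some function $g$ defining $\bX^\ity$ locally" is a well-posed global notion — and this is exactly what Lemma \ref{l:unit} is for, since the relative conormal $C_{g,y}$, hence the whole $(a_g)$-geometry, is unchanged when $g$ is multiplied by a unit. No hard new idea is required; everything reduces to citing the existence of semi-algebraic Whitney and Thom stratifications and observing their stability under refinement. \fin
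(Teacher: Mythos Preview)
Your proposal is correct and follows essentially the same route as the paper: invoke the standard existence theorems for semi-algebraic Whitney and Thom $(a_g)$ stratifications, and use Lemma \ref{l:unit} (invariance of the relative conormal under multiplication of $g$ by a unit, hence well-definedness of $\cC^\ity$) to pass from the local chart construction to a global one. The only cosmetic difference is the order: the paper first takes a Whitney $(a)$-regular stratification, refines so that $\bX^\ity$ is a union of strata, and then refines once more using stratifiability of the $(a_g)$-condition, whereas you do Thom first and Whitney second; since you yourself note the reversed order (and the simultaneous existence theorem) as the cautious alternative, the two arguments are interchangeable.
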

 \begin{proof}
  We follow  \cite[\S 2]{Ti-compo} and start with some  Whitney (a)-regular stratification of $\bX$ with semi-algebraic strata; this exists after Whitney \cite{Wh}, see also \cite[Ch. I]{GWPL}.  One then refines it to a semi-algebraic stratification such that $\bX^\ity$ is a union of strata, see \cite[Ch. I]{GWPL}. Next, since the ($a_g$)-regularity condition is stratifiable (see e.g. \cite[Ch. I]{GWPL}, \cite{DLS}, \cite[\S 3]{Be}), applying the Thom condition to the pairs of strata as in the above statement yields a further refinement which is the desired stratification $\cS$, at least locally.

 We however need to show that this refinement is a \textit{globally} defined stratification of $\bX^\ity$. The argument goes as follows: in the  ($a_g$)-regularity test one uses  the limits at some point of $\bX^\ity$ of the tangent hyperplanes along strata coming from $\bX\m \bX^\ity$. These  limits are precisely described by the space of characteristic covectors at infinity $\cC^\ity$. But by  Lemma \ref{l:unit}, $\cC^\ity$ is independent of the function $g$ defining $\bX^\ity$  locally. 
 \end{proof}

\begin{definition}\label{d:partialThom}
 One calls \textit{partial Thom stratification at infinity} a stratification $\cS$ as in Proposition \ref{p:partialThom}. 
\end{definition}
\noindent
Such stratifications\footnote{have been introduced in \cite[Def. 2.1]{Ti-compo}, \cite[Appendix 1]{Ti-b}, for $p= 1$.} depend of course on the embedding $X\subset \bR^n$. 

\smallskip

 With these notations and definitions, let
$\mathcal{S}=\{S_i\}_{i\in I} $ be a semi-algebraic partial Thom stratification at infinity, the existence of which has been proved above. 
Consider the projection $\tau:\bP^n\times\bR^p\to\bR^p$ and $t_0\in\bR^p$.
The
{\em critical locus at infinity} of the restriction $\tau_{|\bX}$ with respect to $\cS$ is defined as follows:
\[ \Sing^\ity_\cS \tau_{|\bX} := \bigcup_{\cS_i\subset \bX^\ity} \Sing \tau_{| \cS_i}.\]
Since the stratification of $\bX^\ity$ is in particular Whitney (a)-regular, it follows that $\Sing^\ity_\cS \tau_{|\bX}$ is a closed semi-algebraic subset of $\bX^\ity$. Then, by the semi-algebraic Sard theorem, the image $\tau(\Sing^\ity_\cS \tau_{|\bX})\subset \bR^p$ is semi-algebraic and of dimension $\le p-1$.

It also follows from the definition that:
\[
 p_0 \not\in \Sing^\ity_\cS \tau_{|\bX} \Longrightarrow f \mbox{ is $t$-regular at $p_0$, } 
\]
which implies that $\bR^p \m \tau(\Sing^\ity_\cS \tau_{|\bX})$ is included in the set of $t$-regular values of $f$.  Therefore, by Proposition \ref{p:t-reg_rho-reg},  we get the inclusion
 $\bR^p \m \tau(\Sing^\ity_\cS \tau_{|\bX}) \subset \bR^p \m S(f)$,
which shows that $\dim S(f) \le p-1$.


\subsection{Proof of (a).}\label{ss:proofSard}
 Let  $t_0\in \overline{S(f)}$ and let $\{t_{i}\}_{i\in \bN}\subset S(f)$ be a sequence such that $\lim_{i\rightarrow\infty}t_{i}=t_0$.
By definition, for every $i$ we have  a sequence $\{\mathrm{x}_{i,k}\}_{k\in \bN}\subset \cM(f)$
such that $\lim_{k\to \infty} \| \mathrm{x}_{i,k} \| =\infty$ and $\lim_{k\to \infty} f(\mathrm{x}_{i,k})=t_{i}$. For each $i$, there exists
$k(i)\in \bN$ such that if $k\geqslant k(i)$ then $\|\mathrm{x}_{i,k}\| >i$ and $\left| f(\mathrm{x}_{i,k})-t_{i}\right| < 1/i$. Setting $\mathrm{x}_{i}:=\mathrm{x}_{i, k(i)}$, one gets a sequence $\{\mathrm{x}_{i}\}_{i\in \bN}\subset \cM(f)$ such that
 $\lim_{i\to \infty}\|\mathrm{x}_{i}\|=\infty$
and $\lim_{i\to\infty}f(\mathrm{x}_{i})=t_0$. This shows that $t_0\in S(f)$, hence $S(f)$ is closed.

Let us assume now that $X$ is closed and let $t_0\in \overline{f(\Sing f)}\cup S(f)$. We may assume that $t_0\in \overline{f(\Sing f)}$ since we have just proved that $S(f)$ is closed.
Then there exists a sequence $\{\mathrm{x}_j\}_{j\in \bN}\subset\Sing f$,such that $\lim_{j\rightarrow\infty}f(\mathrm{x}_{j})=t_0$.
If $\{\mathrm{x}_j\}_{j\in \bN}$ is non-bounded, we may choose a subsequence
$\{\mathrm{x}_{j_k}\}_{k\in \bN}$ such that $\lim_{k\to\infty} \| \mathrm{x}_{j_k} \| =\infty$
and $\lim_{k\rightarrow\infty}f(\mathrm{x}_{j_k})=t_0$.
Since $\Sing f\subset \cM(f)$, it follows that $t_0\in S(f)$ which is closed. If $\{\mathrm{x}_j\}_{j\in \bN}$ is bounded, then we may choose a
subsequence $\{\mathrm{x}_{j_k}\}_{k\in \bN}$ such that $\lim_{k\to\infty}\mathrm{x}_{j_k}=\mathrm{x}_{0}\in X$ since $X$ is assumed to be closed,
and that $\lim_{k\to \infty}f(\mathrm{x}_{j_k})=t_0$.
Since $\Sing f$ is a closed set, this implies $\mathrm{x}_{0}\in\Sing f$, and we get $t_0=f(\mathrm{x}_{0})\in f(\Sing f)$, which shows that $t_0\in f(\Sing f)\cup S(f)$.

\smallskip
Let us finally show the fibration statement.
We first prove a fibration result in the neighbourhood of  infinity.

\begin{definition}\label{d:triv_at_inf}({\bf Topological triviality at infinity})\\
We say that $f$ is {\em topologically trivial at infinity at the value $t_0\in \bR^p$} if there exists 
a compact set $K\subset \bR^n$ and a ball $B_\delta\subset \bR^p$ centered at $t_0$ such that the restriction:
\begin{equation}\label{eq:triv_at_inf}
f_| : (X\setminus K) \cap f^{-1}(B_\delta) \to  B_\delta
\end{equation}
 is a trivial topological fibration. 
\end{definition}
 Note that one may have two situations in which the mapping (\ref{eq:triv_at_inf}) may be a trivial fibration, namely  whenever $B_\delta\subset\im f$ or when $B_\delta \subset \bR^p \m \im f$.  In the later, the fibration has empty fibre.

\begin{proposition}\label{p:rho-reg_top-triv} \textbf{($\rho$-regularity implies topological triviality at infinity)}\\
 Let $f:X\to\bR^p$ be a $C^2$ mapping, for $n>p$. If the fibre $f^{-1}(t_0)$ is $\rho$-regular at infinity, then $f$ is topologically trivial at infinity at $t_0$.

In particular, $f$ is topologically trivial at infinity at any value of $\bR^p \m S(f)$.
\end{proposition}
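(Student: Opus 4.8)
The plan is to turn the $\rho$-regularity assumption into a submersivity-plus-transversality statement valid beyond a large radius, and then to trivialize $f$ near infinity by integrating lifts of the coordinate vector fields on the target that are kept tangent to the levels of $\rho$; completeness of these lifts will come for free from the properness of $\rho$.

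First I would unpack the hypothesis. Since $\bX^\infty \cap \tau^{-1}(t_0)$ is compact and $f^{-1}(t_0)$ is $\rho$-regular at infinity, the neighbourhoods produced by Definition \ref{d:rho-reg} (equivalently, the sequential reformulation in Remark \ref{n:rho}) can be patched, using finitely many of them, into a single radius $R_0>0$ and a single $\delta>0$ so that the differential $d(f,\rho)_x$ is surjective at every point of the closed region $\overline W := \{x\in X \mid \rho(x)\ge R_0,\ f(x)\in\overline{B_\delta}\}$, where $\overline{B_\delta}:=\overline{B(t_0,\delta)}$; set $W := \{x\in X\mid \rho(x)>R_0\}\cap f^{-1}(B_\delta)\subset\overline W$. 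In particular $f$ is a submersion onto $B_\delta$ on $\overline W$, and each fibre $f^{-1}(t)$, $t\in B_\delta$, meets every level $\rho^{-1}(c)$, $c\ge R_0$, transversally (the finitely many isolated singularities of $f^{-1}(t_0)$ being at bounded distance). As a submersion is an open map, after shrinking $\delta$ we may assume that either $B_\delta\subset\im(f|_W)$ or $B_\delta\cap\im(f|_W)=\emptyset$; in the second case the fibration $(\ref{eq:triv_at_inf})$ has empty fibres and is trivially trivial, so I concentrate on the first. Put $K := \{x\in X\mid \rho(x)\le R_0\}$, enlarged to contain the compact locus off which $\rho$ is defined; this is compact and $(X\setminus K)\cap f^{-1}(B_\delta)=W$.

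Next I would build the trivializing flows. From the surjectivity of $d(f,\rho)$ on $\overline W$ one chooses locally $C^1$ vector fields $v_1,\ldots,v_p$ with $df(v_i)=\partial/\partial t_i$ and $d\rho(v_i)=0$, and glues them by a partition of unity into global such $v_i$ on $W$; these are $C^1$ because $f$ is $C^2$ and $\rho$ is analytic. Since $d\rho(v_i)\equiv 0$, the local flows of the $v_i$ preserve each level $\rho^{-1}(c)$, hence preserve $\{\rho>R_0\}$; and because $\rho$ is a proper submersion, each $\rho^{-1}(c)$ is compact, so the trajectories of $v_i$ are confined to compact slices of $W$ and the flows are complete. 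Following these complete flows along the coordinate segments joining $t_0$ to an arbitrary $t\in B_\delta$ then yields a homeomorphism $\Phi:(W\cap f^{-1}(t_0))\times B_\delta\to W$ with $f\circ\Phi=\mathrm{pr}_{B_\delta}$, i.e. $f|_W$ is a trivial topological fibration over $B_\delta$; this is exactly the triviality of $(\ref{eq:triv_at_inf})$. The final assertion is then immediate: for $t_0\notin S(f)$ the definition of $S(f)$ says there is no sequence in $\cM(f)$ tending to infinity with image tending to $t_0$, which by Remark \ref{n:rho} is precisely the statement that $f^{-1}(t_0)$ is $\rho$-regular at infinity, so the first part applies.

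The step I expect to be the main obstacle is the confinement/completeness of the flows: one must guarantee that a trajectory of $v_i$, which lives in the compact level $\rho^{-1}(c)$, cannot leak out of $W$ through $f^{-1}(\partial B_\delta)$ — which is why I first work over the \emph{closed} ball $\overline{B_\delta}$, where the slices $\rho^{-1}(c)\cap f^{-1}(\overline{B_\delta})$ are compact and $d(f,\rho)$ is still onto, and only afterwards restrict to $B_\delta$ — and, when $X$ is not closed in $\bR^n$, one must also make sure these slices really are compact (intersect with a closed tubular neighbourhood of $X$, or pass to $\overline X$); for $X$ closed, the case needed in Theorem \ref{t:asympSard}(a), this is automatic. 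A minor point deserving a line is that the $p$ lifts $v_i$ need not commute, so the trivialization $\Phi$ is assembled by flowing along coordinate segments rather than by an honest $\bR^p$-action.
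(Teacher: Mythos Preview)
Your proposal is correct, and the geometric core --- that $d(f,\rho)$ is surjective on $\{\rho\ge R_0\}\cap f^{-1}(\overline{B_\delta})$ --- is exactly the same as the paper's.  The difference is in packaging.  The paper does not build lifts $v_i$ by hand; instead it observes that the auxiliary map
\[
(f,\rho)\;:\; f^{-1}(D)\setminus\{\rho\le R_0\}\ \longrightarrow\ D\times[R_0,\infty[
\]
is a \emph{proper} submersion (properness is immediate from the properness of $\rho$), applies Ehresmann's theorem to this map to get a trivial fibration, and then notes that $f_{|}=\pr_D\circ(f,\rho)$ is a composition of two trivial fibrations, hence trivial.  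Your route amounts to unpacking Ehresmann in this particular situation: the constraint $d\rho(v_i)=0$ is precisely what makes $(f,\rho)$-lifts out of $f$-lifts, and your confinement-to-compact-$\rho$-slices argument is the properness of $(f,\rho)$ seen through the vector-field lens.  The paper's formulation is shorter and sidesteps entirely the completeness discussion you correctly flag as the delicate step (trajectories leaking through $f^{-1}(\partial B_\delta)$, non-commuting lifts, etc.), since once $(f,\rho)$ is proper, Ehresmann handles everything.  Your version has the virtue of making the ``sliding along $\rho$-levels'' picture explicit; the paper's buys economy and robustness.
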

\begin{proof}
Let $t_0\not\in S(f)$. Since $S(f)$ is a closed set (as proved just above),
there exists a closed ball $D$ centered at $t_0$ and included in $\bR^p\m S(f)$. Then there exists some large enough radius $R_0 \gg 0$ such that:\begin{equation}\label{eq:milnor-set-at-inf}
 \cM(f)\cap f^{-1}(D)\m  B_{R_0}^n = \emptyset. 
\end{equation}
 Indeed, if this were not true, then there exists a sequence $\{\mathrm{x}_{k}\}_{k\in\mathbb{N}}\subset f^{-1}(D)\cap \cM(f)$
with $\lim_{k\to \ity}\|\mathrm{x}_{k}\|=\infty$ and since $D$ is compact, one may extract a sub-sequence $\{\mathrm{x}_{k_i}\}_{i\in\mathbb{N}}\subset \cM(f)$ with  $\lim_{k\to \ity}f(\mathrm{x}_{k_i})=t \in D$, 
which gives a contradiction to $D\cap S(f) = \emptyset$.

To prove the topological triviality at infinity at $t_0$ it is enough to show that the mapping:
\begin{equation}\label{eq:triv2}
 f_| : f^{-1}(D)\m B_{R}^{n} \to D
\end{equation}
is a trivial fibration on the manifold with boundary $(f^{-1}(D)\m B_{R}^{n}, f^{-1}(D)\cap S_{R}^{n-1})$, for any $R \ge R_0$. This is a submersion by (\ref{eq:milnor-set-at-inf}) but it is not proper, so one cannot apply Ehresmann's theorem directly. Instead, we consider the map $(f,\rho) : f^{-1}(D)\m B_{R}^{n} \to D\times [R, \infty[$. Now, as a direct consequence of its definition, this is a proper map. It is still a submersion by (\ref{eq:milnor-set-at-inf}) and since $\Sing (f,\rho) = \cM(f)$. We then apply Ehresmann's theorem to the mapping $(f,\rho)$ in order to conclude that it is a locally trivial fibration, hence trivial  over $D\times [R, \infty[$. Take now the projection $\pi : D\times [R, \infty[ \to D$ which is a trivial fibration by definition and remark that our map (\ref{eq:triv2}) is the composition $\pi \circ (f,\rho)$ of two trivial fibrations, hence a trivial fibration too.
\end{proof}

\begin{remark}
 It is interesting to point out that the implication in the above proposition is \textit{not an equivalence} in general. The paper \cite{TZ} presents an example of a polynomial mapping $f: \bR^2 \to \bR$ which is not $\rho_E$-regular at infinity at the value $0$, thus not $KOS$-regular either, but it is $C^\ity$ trivial at infinity at $0$.  It is easier to give such examples in the topological cathegory, for instance $f(x,y) = x^3$ which is topologically equivalent to the projection on $x$, whereas $S(f) = \{0\}$.
\end{remark}

We now complete the proof of Theorem \ref{t:asympSard}(a).
Since $\bR^p \m (f(\Sing f)\cup S(f))$ is an open set, for any fixed $t_0\not\in f(\Sing f)\cup S(f)$ there exists a closed ball $D$ centered at $t_0$ such that $D\subset \bR^p\m f(\Sing f)\cup S(f)$. By the above proof of Proposition \ref{p:rho-reg_top-triv} and using the same notations, one has the trivial fibration (\ref{eq:triv2}) on the manifold with boundary $(f^{-1}(D)\m B_{R}^{n}, f^{-1}(D)\cap S_{R}^{n-1})$, for any $R \ge R_0$.

Next, since $D\cap f(\Sing f) = \emptyset$, the restriction:
\begin{equation}\label{eq:triv1}
 f_| : f^{-1}(D)\cap \bar B_{R_0}^{n} \to D
\end{equation}
is a proper submersion on the manifold with boundary $(f^{-1}(D)\cap \bar B_{R_0}^{n}, f^{-1}(D)\cap S_{R_0}^{n-1})$ and therefore a locally trivial fibration by Ehresmann's theorem, hence a trivial fibration over $D$.

We finally glue together the two trivial fibrations \eqref{eq:triv1} and \eqref{eq:triv2} by using an isotopy and the trivial fibration from the following commutative diagram, for some $R > R_0$:
\begin{equation}\label{eq:triv3}
\begin{array}{c}\xymatrix{ 
 (\bar B_{R} \m \mathring{B}_{R_0}) \cap f^{-1}(D) \ar@{>}[d]^{\simeq}   \ar@{>}[r]^{\ \ \ \ \ \ (f,\rho)} & D \times [ R_0, R] \ar@{>}[d]^{\pr} \\
\hat F \times  D \times [ R_0, R] \ar@{>}[ur]  \ar@{>}[r]^{\ \ \ \ \ \ \ \pr}  &  D }
\end{array}
\end{equation}
where $\hat F$ denotes the fibre of the trivial fibration $f_| : S_R \cap f^{-1}(D) \to D$ and does not depend on the radius $R > R_0$.
\fin

\begin{remark}\label{r:X-closed}
 If in Theorem \ref{t:asympSard}(a) we do not assume that $X$ is closed then the fibration assertion holds if one replaces in the statement $f(\Sing f)$ by its closure $\overline{f(\Sing f)}$.
\end{remark}






\begin{thebibliography}{GTW} 

\bibitem[Be]{Be}
{\sc K. Bekka},  \textit{Regular stratification of subanalytic sets}.  Bull. London Math. Soc.  25  (1993),  no. 1, 7-16. 

\bibitem[Br]{Br}
{\sc S.A. Broughton},  {\em Milnor number and the topology of polynomial
hypersurfaces}, Inventiones Math. {\bf  92} (1988),  217--241.

\bibitem[D\L S]{DLS}
{\sc Z. Denkowska, S. \L ojasiewicz, J. Stasica},  \textit{Certaines propri\'et\'es \'el\'ementaires des ensembles sous-analytiques.}  Bull. Acad. Polon. Sci. S\'er. Sci. Math.  27  (1979), no. 7-8, 529-536 (1980). 

\bibitem[Di]{R-thesis}
{\sc L.R.G. Dias}, \textit{PhD Thesis}, Universidade de S\~ao Paulo and Universit\' e Lille 1.

\bibitem[Du]{Du}
{\sc A.H. Durfee}, \textit{Five definitions of critical point at infinity}. Singularities (Oberwolfach, 1996), 345-360, Progr. Math., 162, Birkh\"auser, Basel, 1998.

\bibitem[Ga1]{Gaintclosure}
{\sc T. Gaffney,} {\em Integral closure of modules and Whitney equisingularity,} Inventiones Math. 107 (1992), 301-322.

\bibitem[Ga2]{Gaaureoles}
{\sc T. Gaffney,} {\em Aureoles and integral closure of modules,} Stratifications, Singularities and Differential Equations II, Travaux en Cours, 55. Hermann, Paris, 1997, 55-62.

\bibitem[Ga3]{Ga-infinity}
{\sc T. Gaffney,} {\em Fibers of polynomial mappings at infinity and a generalized Malgrange
condition,} Compositio Math. 119 (1999), p. 157-167.



\bibitem[GTW]{GTW}
{\sc T. Gaffney, D. Trotman, L. Wilson,} {\em Equisingularity of sections, $t^r$ condition, and the integral closure of modules,} J. Algebraic Geometry 18 (2009), p. 651-689.

\bibitem[GWPL]{GWPL}
 {\sc   C.G. Gibson, K. Wirthm\" uller, A.A. du Plessis, E.J.N. Looijenga}, {\em Topological
Stability of Smooth Mappings}, Lect. Notes in Math. {\bf 552}, Springer Verlag 1976.

\bibitem[HaLe]{HaLe}
{\sc H\`a H.V.,  L\^ e D.T.}, \textit{Sur la topologie des polyn\^ omes complexes,}  Acta Math. Vietnam.  9  (1984),  no. 1, 21-32 (1985).

 \bibitem[Hi]{Hi}
 {\sc H. Hironaka}, {\em Stratifications and flattness}, in: Real and Complex
Singularities, Oslo 1976, Sijhoff en Norhoff, Alphen a.d. Rijn 1977.

\bibitem[Je1]{Je-man}
{\sc Z. Jelonek}, \textit{ On the generalized critical values of a polynomial mapping}.  Manuscripta Math.  110  (2003),  no. 2, 145-157.

\bibitem[Je2]{Je}
{\sc Z. Jelonek}, {\em On asymptotic critical values and the Rabier theorem}.  Geometric singularity theory,  125-133, Banach Center Publ., 65, Polish Acad. Sci., Warsaw, 2004.

\bibitem[Ku]{Kuo}
T.Z. Kuo, \textit{Characterizations of $v$-sufficiency of jets.}  Topology  11 (1972), 115-131.

\bibitem[KOS]{KOS}
{\sc K. Kurdyka, P. Orro, S. Simon,} {\em Semialgebraic Sard theorem for generalized critical values,} J. Differential Geometry 56 (2000), 67-92.

\bibitem[Ma]{Ma}
{\sc J.N. Mather}, {\em Notes on topological stability}, Harvard University 1970.

\bibitem[Mi]{Mi}
{\sc J. Milnor}, {\em Singular points of complex hypersurfaces}, Ann. of Math. 
Studies~61, Princeton 1968.

\bibitem[Ne]{Ne}
{\sc A. N\' emethi},
\textit{Lefschetz theory for complex affine varieties.} Rev. Roumaine Math. Pures Appl. 33 (1988), no. 3, 233-250.

  \bibitem[NZ]{NZ}
{\sc A. N\' emethi, A. Zaharia},  {\em  On the bifurcation set of a polynomial and Newton
boundary}, Publ. RIMS  {\bf 26} (1990),  681--689.

\bibitem[Pa1]{Pa1}
 {\sc A. Parusi\'nski},
{\em  On the bifurcation set of complex polynomial with isolated
singularities at infinity}, Compositio Math. 97 (1995), no. 3,
369-384.

\bibitem[Pa2]{Pa2} 
{\sc A. Parusi\'nski}, {\em  A note on singularities at infinity of complex polynomials}, Simplectic singularities and geometry of gauge fields, Banach Center Publ. 39 (1997), 131-141.

\bibitem[PZ]{LZ}
{\sc A. L. P\u aunescu, A. Zaharia}, {\em On the \L ojasiewicz exponent 
at infinity for polynomial functions},  Kodai Math. J.  20  (1997),  no. 3, 269--274.


\bibitem[Ph]{Ph} 
{\sc F. Pham} , {\em Vanishing homologies and the $n$
variable saddlepoint method}, Arcata Proc. of Symp. in Pure Math.
vol. 40, II (1983), 319-333.

\bibitem[Ra]{Ra}
{\sc P.J. Rabier}, 
\textit{Ehresmann fibrations and Palais-Smale conditions for morphisms of Finsler manifolds.} 
Ann. of Math. (2) 146 (1997), no. 3, 647-691.

\bibitem[ST]{ST1} 
{\sc D. Siersma,  M. Tib\u{a}r}, {\em Singularities
at infinity and their vanishing cycles}, Duke Math. Journal 80 (3)
(1995), 771-783.

\bibitem[Ti1]{Ti-cras}
{\sc M. Tib\u ar}, {\it On the monodromy fibration of polynomial functions with singularities at infinity.}  C. R. Acad. Sci. Paris S\' er. I Math.  324  (1997),  no. 9, 1031--1035.

\bibitem[Ti2]{Ti-compo}
{\sc M. Tib\u ar}, \textit{Topology at infinity of polynomial mappings and Thom regularity condition}.  Compositio Math.  111  (1998),  no. 1, 89-109.

\bibitem[Ti3]{Ti-reg} 
{\sc M. Tib\u{a}r}, {\em Regularity at infinity of
real  and complex polynomial maps}, Singularity Theory, the C.T.C.
Wall Anniversary Volume, LMS Lecture Notes Series 263 (1999),
249-264. Cambridge University Press.

\bibitem[Ti4]{Ti-imrn} 
{\sc M. Tib\u{a}r}, \textit{Asymptotic equisingularity and topology of complex hypersurfaces}.  Internat. Math. Res. Notices  1998,  no. 18, 979-990.

\bibitem[Ti5]{Ti-b} 
{\sc M. Tib\u{a}r}, {\em Polynomials and Vanishing
Cycles}, Cambridge Tracts in Mathematics 170, Cambridge University
Press 2007.

\bibitem[TY]{TY}
{\sc M. Tib\u ar, Chen Ying}, {\it Bifurcation values of mixed polynomials}, August 2010, preprint arXiv:1011.4884v1.

\bibitem[TZ]{TZ} 
{\sc M. Tib\u{a}r, A. Zaharia},  \textit{Asymptotic behaviour of families of real curves}, Manuscripta Math.  99  (1999),  no. 3, 383-393.

\bibitem[Wh]{Wh}
{\sc H. Whitney}, 
\textit{Local properties of analytic varieties.} 1965 Differential and Combinatorial Topology (A Symposium in Honor of Marston Morse) pp. 205-244 Princeton Univ. Press, Princeton, N. J.

\end{thebibliography}
\end{document}